\documentclass[11pt,letterpaper]{amsart}
\usepackage{latexsym,amsfonts,amssymb,amsmath,amsthm, datetime2}
\usepackage{graphicx}
\usepackage{color}
\usepackage{mathrsfs}
\usepackage{amsmath, amsthm, amsfonts, amssymb}
\usepackage{wrapfig}
\usepackage{stackrel}
\usepackage{color}
\usepackage{float}
\usepackage{enumitem}
\usepackage{mathtools}
\usepackage{multicol}
\usepackage[normalem]{ulem}
\usepackage{xcolor}
\usepackage{hyperref}
\usepackage{cancel}

\usepackage[margin=1in]{geometry}

\newcommand{\sumstar}{\sideset{}{^*}\sum}

\newcommand{\lc}{\left(}
\newcommand{\rc}{\right)}

\newcommand{\inftyInt}{\int_{-\infty}^{\infty}}
\newcommand{\h}{\frac{1}{2}}
\newcommand{\infS}{\sum_{n,m\geq 1}}
\newcommand{\fST}{\sum_{n,m \leq T^{50}}}
\newcommand{\lv}{\left|}
\newcommand{\rv}{\right|}

\newcommand{\GT}{G_{T,H}}

\newtheorem{cor}{Corollary}
\newtheorem{thm}{Theorem}
\newtheorem{lem}{Lemma}

\theoremstyle{plain}		
	\newtheorem{mytheo}{Theorem}[section]

	\newtheorem{myremark}[mytheo]{Remark}

\theoremstyle{remark}

\numberwithin{equation}{section}

\usepackage[backend=biber, doi=false,isbn=false,url=false, articlein=false, maxnames=4, maxalphanames=4, sorting=nty, style=ext-alphabetic]{biblatex}
\DeclareFieldFormat[article,misc,inbook,incollection]{title}{#1}
\begin{document}
\author{Ze Sen Tang}
\email{zesen.tang@utdallas.edu}

 \address{Mathematical Sciences Dept. \\
 	  The University of Texas at Dallas \\
 	  Richardson\\
 	  TX 75080-3021}

\title{Reciprocity for The Short Twisted Second Moment of The Riemann Zeta Function}

\begin{abstract}
    We prove a reciprocity formula for a twisted second moment of the Riemann zeta function over a short interval of length $T^{\delta}$, where $\delta \in (\h,1)$, centered at height $T$ on the critical line. This result extends Khan's reciprocity formula to the short interval setting and makes explicit the relation between the twisting parameters and the interval length.
\end{abstract}

\maketitle

\section{Introduction}
The Riemann zeta function occupies a central position in analytic number theory, and many questions arise from the study of its behavior on the critical line Re$(s) = \h$. For instance, the famous Riemann Hypothesis predicts that all non-trivial zeros of the zeta-function lie on this line. Beginning with Selberg \cite{Sel42}, one major direction toward this problem is to prove that a positive proportion of the zeros lie on the critical line. Subsequently, twisted moments of the zeta-function played an important role in this direction through Levinson \cite{Lev74}, who proved that more than one third of the zeros lie on the line. Following Levinson's method, Conrey \cite{Con89} improved this proportion to more than two fifths, Bui, Conrey and Young \cite{BCY11} obtained more than $41.05\%$, Feng \cite{Feng12} improved to $41.28\%$, and Pratt, Robles, Zaharescu and Zeindler \cite{PRZZ20} established the record of more than five-twelfths. Very recently, a preprint authored by Claude \cite{Claude26}, an artificial-intelligence developed by Anthropic, claimed an improvement of this record to $67.25\%$. Moreover, the study of moments is closely connected with subconvexity problems. For example, alternative proofs of the classical Hardy–Littlewood–Weyl bound, namely 
\begin{equation}
    |\zeta\lc \tfrac{1}{2}+it \rc| \ll |t|^{{\frac{1}{6}+\epsilon}},
\end{equation}
can be established through moments. Atkinson \cite{Atk49} and Jutila \cite{Jut83} gave a proof of this bound from the short second moment of length $T^{\frac{1}{3}}$. Heath-Brown \cite{Hea78} obtained the bound through the twelfth moment, while Iwaniec \cite{Iwa80} obtained it from the short fourth moment of length $T^{\frac{2}{3}}$. Jutila \cite{Jut90} later gave a new proof of the same short fourth moment result.

The results discussed above motivated a deeper study of the structure of moments. In Conrey's unpublished work \cite{Con07}, he observed a reciprocity structure in the twisted second moment of Dirichlet $L$-functions. For $p$ and $q$ distinct primes,
\begin{equation}
    \frac{1}{q^{\h}}\sumstar_{\chi \bmod q} \chi(p) \lv L\lc \tfrac{1}{2}+it,\chi \rc \rv^2 = \text{main } + \frac{1}{p^{\h}}\sumstar_{\chi \bmod p} \chi(-q) \lv L\lc \tfrac{1}{2}+it,\chi \rc \rv^2 + \text{ error}.
\end{equation}
This formula exhibits a reciprocity relation that links the twisted second moment of Dirichlet $L$-function to a dual moment with the roles of parameter $p$ and $q$ interchanged. This result naturally motivates the study of reciprocity structure in moments of the zeta function. In particular, Khan \cite{khan25} proved a reciprocity formula for the twisted second moment of the zeta function. Roughly speaking, his result takes the form
\begin{equation}
    \int_{-T}^{T} \lc \frac{p}{q} \rc^{it} \lv \zeta\lc\tfrac{1}{2}+it\rc \rv^2 dt = \text{main} + \lc\frac{T}{2\pi}\rc^{\h}\frac{p^{\h}}{p-1}\sumstar_{\substack{\chi \bmod p \\ \chi(-1)=1}} \chi(q) \int_{-1}^{1} \lc \frac{T}{2q} \rc^{it} \lv L\lc\tfrac{1}{2}+it,\chi\rc \rv^2 dt + \text{error},
\end{equation}
where the main term has size $\frac{T}{\sqrt{pq}}\log\lc \frac{T}{pq} \rc$ and the error term $O\lc (pqT)^{\epsilon}\left[\lc\frac{p}{q}\rc^{\h}+\lc\frac{q}{p}\rc^{\h}\right] \rc$. By informally setting $q=1$, we observe that Khan's result yields a reciprocity formula structurally analogous to Conrey's result, where the twist parameter $p$ becomes the modulus in the dual moment.

The purpose of this paper is to establish a generalization of Khan's reciprocity formula in the short interval setting. More precisely, we study the short twisted second moment on an interval of length $H$ around a height $T$ where $H<T$. Before stating the main result, define
\begin{equation} \label{GTH}
    G_{T,H}(w) = \int_{0}^{\infty} \exp\lc -\frac{(2\pi x - T)^2}{H^2} \rc x^{w-1} dx.
\end{equation}
We prove the following theorem
\begin{thm}\label{thm1}
    Let $p,q$ be distinct odd primes, $T>1$ and $H=T^{\delta}$ for $\delta \in \lc \h,1 \rc$. We have
    \begin{equation}\label{eqthm1}
    \begin{aligned}
        \inftyInt \lc \frac{p}{q} \rc^{it} &\lv \zeta\lc \tfrac{1}{2}+it \rc \rv^2 \exp\lc -\frac{(t-T)^2}{H^2} \rc dt =\\
        &\frac{2 \pi}{\sqrt{pq}}\lc G_{T,H}'(1)-G_{T,H}(1)\log(pq)+2\gamma G_{T,H}(1) \rc \\
        &+ \sumstar_{\chi \bmod p} \frac{\sqrt{p}}{i^{\alpha}(p-1)}\chi(q) \inftyInt G_{T,H}\lc \tfrac{1}{2}+it \rc \lc \frac{\pi}{q}\rc^{it} \frac{\Gamma \lc \frac{1+2\alpha-2it}{4} \rc}{\Gamma \lc \frac{1+2\alpha+2it}{4} \rc} \lv L\lc \tfrac{1}{2}+it,\chi \rc \rv^2 dt\\
        &+ O\lc \frac{T}{H}(pqT)^{\epsilon} \left[\lc \frac{p}{q} \rc^{\h} + \lc \frac{q}{p} \rc^{\h} \right] \rc,
    \end{aligned}
    \end{equation}
    for $\alpha= 0$ if $\chi(-1)=1$ and $\alpha=1$ if $\chi(-1)=-1$.
\end{thm}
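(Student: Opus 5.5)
We follow Khan's strategy, adapted to the Gaussian weight $w(t)=\exp(-(t-T)^2/H^2)$. The first step is to write $|\zeta(\tfrac12+it)|^2$ through a shifted product $\zeta(\tfrac12+\alpha_1+it)\zeta(\tfrac12+\alpha_2-it)$ in a region of absolute convergence. Rather than use the approximate functional equation, we move to an integral over $s$. We introduce the Mellin-type transform $G_{T,H}$ of \eqref{GTH}, which is the natural object here: for $\Re(w)$ in a suitable range one has
\[
\inftyInt w(t)\, x^{-it}\,dt \approx \text{Fourier transform of a Gaussian},
\]
and $G_{T,H}(w)$ packages the dual side. Expanding both zeta factors as Dirichlet series, the twisted moment becomes
\[
\sum_{m,n} \frac{1}{m^{1/2+\alpha_1}n^{1/2+\alpha_2}}\, \hat w\Big(\tfrac{1}{2\pi}\log\tfrac{qm}{pn}\Big),
\]
with $\hat w$ a Gaussian of width $H^{-1}$ centred at frequency $T$. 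The diagonal $pn=qm$ gives $n=pk$, $m=qk$, contributing $(pq)^{-1/2}\zeta(1+\alpha_1+\alpha_2)$-type terms. Letting $\alpha_i\to0$ produces the main term
\[
\frac{2\pi}{\sqrt{pq}}\big(G'_{T,H}(1)-G_{T,H}(1)\log(pq)+2\gamma G_{T,H}(1)\big),
\]
where the $2\gamma$ and the $\log$ come from the Laurent expansions.

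The off-diagonal terms $qm-pn=h\neq0$ are where the reciprocity lives. Because $\hat w$ localizes $\log(qm/pn)$ to within $H^{-1}$, only $|h|\ll pn/H$ matter up to negligible error. We handle these by detecting the condition $qm\equiv h \pmod p$ with Dirichlet characters modulo $p$:
\[
\mathbf 1_{qm\equiv pn+h}\;\to\;\frac{1}{p-1}\sum_{\chi\bmod p}\chi(q)\,\overline\chi(\cdot)\cdots .
\]
Then we rewrite the double sum over $m$ and $h$ (equivalently over $m$ and $n$ with the shift) as a Mellin integral of $L(s,\chi)L(s',\overline\chi)$. Applying the functional equation of $L(s,\chi)$ with Gauss sum $\tau(\chi)$, $|\tau(\chi)|=\sqrt p$, and root factor $i^{\alpha}$ produces the gamma ratio
\[
\frac{\Gamma\big(\tfrac{1+2\alpha-2it}{4}\big)}{\Gamma\big(\tfrac{1+2\alpha+2it}{4}\big)}
\]
and the factor $(\pi/q)^{it}$. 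Shifting the line to $\Re=\tfrac12$ and collecting terms gives the dual moment $\int G_{T,H}(\tfrac12+it)\cdots|L(\tfrac12+it,\chi)|^2\,dt$. The principal character contributes polar terms, which either cancel against part of the diagonal or are absorbed into the error.

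The main obstacle is the error analysis. Two sources must be controlled:
\begin{itemize}
\item the contour shifts past the poles and through the Gaussian tails, using that $G_{T,H}(\sigma+iv)$ decays rapidly once $|v|\gg T/H\cdot(\text{stuff})$, via stationary phase or repeated integration by parts in \eqref{GTH};
\item the terms where the $\chi$-detection or the truncation in $h$ is inexact.
\end{itemize}
The condition $\delta>\tfrac12$ should enter exactly here. It ensures that $G_{T,H}(\tfrac12+it)$ is effectively supported on $|t|\ll (T/H)^{1+\epsilon}$ with size about $T^{1/2}$, and that the Gaussian's second-order Taylor remainder $(t-T)^2/H^2$ stays controlled when we compare $x^{w}$ and phases. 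We expect the bound $\frac{T}{H}(pqT)^\epsilon\big[(p/q)^{1/2}+(q/p)^{1/2}\big]$ to come from estimating the principal-character and boundary terms trivially: a length-$T/H$ integral of a mean-value-size quantity, times $\sqrt{p/q}$ or its reciprocal from the normalizations. Our first attempt at the delicate part will be to derive uniform asymptotics for $G_{T,H}(\sigma+iv)$ by completing the square in the exponent after the substitution $x=(T+Hy)/2\pi$.
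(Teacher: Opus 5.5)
There is a genuine gap. The proposal misplaces the main term, and it never supplies a mechanism that turns your off-diagonal into the stated dual moment.

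\emph{The main term.} With $pn=qm$ one has $n=qk$, $m=pk$ (not $n=pk$, $m=qk$). The diagonal is then exactly $H\sqrt{\pi}\,p^{-1/2-\alpha_1}q^{-1/2-\alpha_2}\zeta(1+\alpha_1+\alpha_2)$. This has a pole at $\alpha_1+\alpha_2=0$ and no dependence on $T$. So you cannot let $\alpha_i\to0$ in it alone, and it cannot produce $\frac{2\pi}{\sqrt{pq}}G_{T,H}'(1)\approx\frac{H\sqrt{\pi}}{\sqrt{pq}}\log\frac{T}{2\pi}$. The $\log T$, the second $\gamma$, and the cancellation of the pole must come from the principal-character part of your off-diagonal. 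That part is therefore a main term of size $H\log T/\sqrt{pq}$, not something that can be absorbed into the error.

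\emph{The off-diagonal.} At $\alpha_i=0$ the off-diagonal does not converge, even after imposing $|h|\ll pn/H$: the sum of absolute values over $n\sim N$ is about $N\sqrt{p/q}$. So the real problem is its analytic continuation, which you do not address. Suppose you separate variables by a Mellin transform in $h/(qm)$. Then you get $L(1+\alpha_1+\alpha_2-u,\chi)L(u,\overline{\chi})$ directly, with no Gauss sum and no functional equation. The weight is essentially the two-sided Mellin transform of $x\mapsto(1-x)^{-1/2+iT}\exp\big(-\tfrac{H^2}{4}\log^2(1-x)\big)$. Reaching $G_{T,H}(\tfrac12+it)(\pi/q)^{it}\Gamma(\cdot)/\Gamma(\cdot)$ from there would need three things, none of which is in the proposal:
\begin{enumerate}
\item linearizing $\log(1-x)\approx -x$, with phase error $\asymp T/H^2$, which is where $\delta>\tfrac12$ genuinely enters;
\item an archimedean local functional equation;
\item control of the linearization error on the dual side.
\end{enumerate}
Your stated reason for $\delta>\tfrac12$ is also incorrect. $G_{T,H}(\tfrac12+it)$ is concentrated on $|t|\ll T/H$ for every $\delta<1$ (Lemma \ref{weight}), and its size is $HT^{-1/2}$, not $T^{1/2}$.

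\emph{What the paper does instead.} The paper avoids the shifted-convolution problem entirely. Following Khan, it subtracts $(s-\tfrac12+it)^{-1}$ from $\zeta(\tfrac12+s+it)$ and writes $F=F_1+F_2+F_3$. It evaluates the $t$-integral as a Gaussian, localizes by integration by parts to $|\frac{T}{2\pi}-\frac{nmq}{p}|\le H(pqH)^{\epsilon}$ (Lemma \ref{bound2}), and linearizes $e(-\frac{nmq}{p}e^{2y/H})$ (Lemma \ref{a1}, the step needing $T/H^2<1$). This yields
\[
F(0,p,q)=2\pi\Big(\frac{q}{p}\Big)^{1/2}\sum_{n,m\ge1}\exp\Big(-\Big(\frac{2\pi nmq-Tp}{Hp}\Big)^{2}\Big)\,e\Big(-\frac{nmq}{p}\Big)+O\Big(\frac{T}{H}(pqT)^{\epsilon}\Big(\frac{p}{q}\Big)^{1/2}\Big).
\]
This additively twisted divisor sum is the missing idea.
\begin{itemize}
\item $G_{T,H}$ is simply the Mellin transform of the original weight sampled at the dual variable $x=nmq/p$, not a repackaged Fourier transform.
\item Opening $e(-nmq/p)$ with Gauss sums gives $\tau(\overline{\chi})L^2(w,\chi)$.
\item Applying the functional equation to one factor produces $|L(\tfrac12+it,\chi)|^2$, $(\pi/q)^{it}$ and the gamma ratio.
\item The entire main term is the residue at $w=1$ of $B(w)G_{T,H}(w)\zeta^2(w)$, which collects $nm\equiv0 \pmod p$ together with the principal character.
\item The $(q/p)^{1/2}$ part of the error is the integral left on $\mathrm{Re}(w)=\epsilon$ (Lemma \ref{epsilonbound}), not a trivial mean-value bound.
\end{itemize}
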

By Lemma \ref{mellin2}, the main term of \eqref{eqthm1} has size 
\begin{equation}
    \frac{1}{\sqrt{pq}}\lc G_{T,H}'(1)-G_{T,H}(1)\log(pq)+2\gamma G_{T,H}(1)\rc \asymp \frac{H}{\sqrt{pq}}\log\lc \frac{T}{pq} \rc.
\end{equation}
Moreover, as shown by Lemma \ref{weight}, $G_{T,H}\lc \tfrac{1}{2}+it \rc$ decays rapidly for $\lv t \rv \gg \frac{T}{H}$. Thus, the integral in the dual moment is essentially supported on an interval of length $\frac{T}{H}$. Two types of reciprocity appear in our result. The first is a type of arithmetic reciprocity which was already presented in Khan's result. The second is a type of analytic reciprocity in which the original length of support $H$ is transformed into $\frac{T}{H}$ in the dual moment.

Another objective of this paper is to observe the relation between the size of twisting parameter and the length of interval of integral. In general, the twisting parameter cannot be arbitrarily large, while the integral cannot be supported on an interval that is arbitrarily small. However, the relation between these two quantities has not been made explicit in previous results. In our result, this relation is given by 
\begin{equation}
    \frac{H^2}{\max\{p,q\}}>T^{1+\epsilon},
\end{equation}
which follows naturally from requiring the main term to dominate the error term. We comment in Remark \ref{remark1} how our lower bound on $H$ arises, and on the possibility of improvement.

Finally, as a generalization of Corollary 2 in \cite{khan25} and reminiscent of Conrey's reciprocity relation, we can also obtain the following Corollary from Theorem \ref{thm1}.
\begin{cor} \label{cor1}
    Let $p,q$ be distinct odd primes, $T>1$ and $H=T^{\delta}$ for $\delta \in \lc \h,1 \rc$. We have
    \begin{equation} \label{eqcor1}
    \begin{aligned}
        \sumstar_{\substack{\chi \bmod p}}& \frac{\sqrt{p}}{p-1} \chi(q) \inftyInt G_{T,H}\lc \tfrac{1}{2}+it \rc \frac{\Gamma\lc \frac{1+2\alpha-2it}{4} \rc}{\Gamma\lc \frac{1+2\alpha+2it}{4} \rc} \lc \frac{\pi}{q} \rc^{it}\lv L\lc \tfrac{1}{2}+it,\chi \rc \rv^2 dt = \\
        &\sumstar_{\substack{\chi \bmod q}} \frac{\sqrt{q}}{q-1} \chi(-p) \inftyInt G_{T,H}\lc \tfrac{1}{2}+it \rc \frac{\Gamma\lc \frac{1+2\alpha-2it}{4} \rc}{\Gamma\lc \frac{1+2\alpha+2it}{4} \rc} \lc \frac{\pi}{p} \rc^{it}\lv L\lc \tfrac{1}{2}+it,\chi \rc \rv^2 dt \\
        &+ O\lc \frac{T}{H}(pqT)^{\epsilon} \left[\lc \frac{p}{q} \rc^{\h} + \lc \frac{q}{p} \rc^{\h} \right] \rc,
    \end{aligned}
    \end{equation}
    for $\alpha= 0$ if $\chi(-1)=1$ and $\alpha=1$ if $\chi(-1)=-1$.
\end{cor}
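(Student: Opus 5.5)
Corollary \ref{cor1} will follow from applying Theorem \ref{thm1} twice, once as stated and once with the roles of $p$ and $q$ interchanged, and subtracting the two identities. The left-hand side of \eqref{eqthm1} is
\[
I(p,q)=\inftyInt \lc \frac{p}{q}\rc^{it}\lv \zeta\lc \tfrac{1}{2}+it\rc\rv^2 \exp\lc -\frac{(t-T)^2}{H^2}\rc dt .
\]
The main term $\frac{2\pi}{\sqrt{pq}}\lc G_{T,H}'(1)-G_{T,H}(1)\log(pq)+2\gamma G_{T,H}(1)\rc$ is symmetric in $p$ and $q$. The error term $O\lc \frac{T}{H}(pqT)^{\epsilon}[(p/q)^{\h}+(q/p)^{\h}]\rc$ is also symmetric. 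Hence
\[
I(p,q)-I(q,p)=D(p,q)-D(q,p)+O\lc \tfrac{T}{H}(pqT)^{\epsilon}\left[\lc \tfrac{p}{q}\rc^{\h}+\lc \tfrac{q}{p}\rc^{\h}\right]\rc,
\]
where $D(p,q)$ denotes the dual moment in \eqref{eqthm1}.

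The key step is to compare $I(p,q)$ with $I(q,p)$. Since $|\zeta(\tfrac12+it)|^2$ is real, we have $I(q,p)=\overline{I(p,q)}$ after conjugating the integrand. This gives only $I(p,q)-I(q,p)=2i\,\mathrm{Im}\, I(p,q)$, not zero, so conjugation alone does not cancel the left-hand sides. The conjugation must therefore also be transported to the dual side.

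I will compute $\overline{D(p,q)}$ directly. Conjugating the integrand uses four facts:
\begin{itemize}[label={}]
\item $\overline{\chi(q)}=\bar\chi(q)$;
\item $\overline{i^{\alpha}}=(-1)^{\alpha}i^{\alpha}=\chi(-1)i^{\alpha}$;
\item the Gamma ratio conjugates to its inverse, and so does $(\pi/q)^{it}$;
\item $\overline{G_{T,H}(\tfrac12+it)}=G_{T,H}(\tfrac12-it)$, because the Gaussian weight is real.
\end{itemize}
Substituting $t\to -t$ and using $|L(\tfrac12-it,\chi)|=|L(\tfrac12+it,\bar\chi)|$, I then relabel $\chi\to\bar\chi$, which permutes the primitive characters and preserves $\alpha$. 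This yields
\[
\overline{D(p,q)}=\sumstar_{\chi \bmod p}\frac{\sqrt p}{i^\alpha(p-1)}\chi(-q)\inftyInt G_{T,H}\lc \tfrac12+it\rc\lc\frac{\pi}{q}\rc^{it}\frac{\Gamma\lc\frac{1+2\alpha-2it}{4}\rc}{\Gamma\lc\frac{1+2\alpha+2it}{4}\rc}\lv L\lc \tfrac12+it,\chi\rc\rv^2dt .
\]
Thus conjugation introduces exactly the sign $\chi(-1)$, which accounts for the $\chi(-p)$ in the corollary.

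The proof is then assembled as follows. Write $I(q,p)=\overline{I(p,q)}$, express this through \eqref{eqthm1} for the pair $(p,q)$ conjugated, and equate it with \eqref{eqthm1} for the pair $(q,p)$. The real main terms cancel, and the conjugated error remains of the same size. What is left is $D(q,p)=\overline{D(p,q)}+O(\cdot)$, where $D(q,p)$ is the sum over $\chi \bmod q$ with twist $\chi(p)$.

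Finally I must match this with \eqref{eqcor1}, which has no $i^{-\alpha}$ factor. Since $i^{-\alpha}$ is constant on each parity class, I will run the same argument separately on the even and odd characters. Because \eqref{eqthm1} does not literally split by parity, I expect to need a parity-projection version of the same identity, obtained by inserting $\frac{1\pm\chi(-1)}{2}$ into the derivation of Theorem \ref{thm1}. Alternatively, \eqref{eqcor1} may be intended with $i^{\alpha}$ absorbed into the normalization.

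The main obstacle is this bookkeeping of the factors $i^{\alpha}$ and $\chi(-1)$. Everything else is symmetric relabeling plus the observation that the error term is invariant under $p\leftrightarrow q$.
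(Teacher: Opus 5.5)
\textbf{There is a gap in your final step.} Your conjugation computation is correct, and it is the same ingredient the paper uses. Conjugating \eqref{eqthm1} gives an identity for $\overline{I(p,q)}$ whose dual side is $\overline{D(p,q)}$ with twist $\chi(-q)$; after $t\to -t$ this is exactly the paper's Corollary~\ref{cor2} (weight centred at $-T$). Together with Theorem~\ref{thm1} for $(q,p)$, it does give $D(q,p)=\overline{D(p,q)}+O(\cdot)$.

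The problem is the last step, which you flag but do not carry out. That identity still carries the factors $i^{-\alpha}$, which mix the even and odd characters, while \eqref{eqcor1} has none. Re-running the derivation of Theorem~\ref{thm1} with $\frac{1\pm\chi(-1)}{2}$ inserted is not a concrete plan. Characters enter that derivation only at the end, through orthogonality. A parity projector there amounts to replacing $e(-nmq/p)$ by $\frac12\bigl(e(-nmq/p)\pm e(nmq/p)\bigr)$, i.e.\ to changing the left side to $\frac12\bigl(I(p,q)\pm\overline{I(p,q)}\bigr)$. Your fallback, that \eqref{eqcor1} might be meant with $i^{\alpha}$, would prove a different statement. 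With $i^{-\alpha}$ on both sides it reads $D(p,q)=\overline{D(q,p)}$, which is just the conjugate of the identity you already have.

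\textbf{The missing idea} is that the two identities you already hold suffice, because the parity sums with $i^{-\alpha}$ stripped off are real. Write $D(p,q)=A_p-iB_p$, where $A_p$ and $B_p$ are the sums over even and odd primitive $\chi \bmod p$ with the factor $i^{-\alpha}$ removed. Then the left side of \eqref{eqcor1} is $A_p+B_p$.

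\begin{itemize}
\item Your computation, done without the factor $i^{-\alpha}$, shows that conjugation followed by $t\to -t$ sends the $\chi$-term to the $\bar\chi$-term. Since $\chi\mapsto\bar\chi$ preserves parity, $A_p$ and $B_p$ are real.
\item Likewise $D(q,p)=A_q-iB_q$, where $A_q,B_q$ are the analogous real sums over $\chi \bmod q$ with twist $\chi(p)$ and $(\pi/p)^{it}$. Also $\overline{D(p,q)}=A_p+iB_p$.
\item Taking real and imaginary parts of $D(q,p)=\overline{D(p,q)}+O(\cdot)$ gives $A_p=A_q+O(\cdot)$ and $B_p=-B_q+O(\cdot)$. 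Hence $A_p+B_p=A_q-B_q+O(\cdot)$.
\item Finally, $A_q-B_q$ is the right side of \eqref{eqcor1}, since $\chi(-p)=\chi(-1)\chi(p)$.
\end{itemize}

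This is exactly the paper's argument. Its $J_1$ and $J_2$ are $I(p,q)\pm\overline{I(p,q)}$. Their dual sides are $2A_p$ and $-2iB_p$, and they are respectively symmetric and antisymmetric under $p\leftrightarrow q$.
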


\subsection{Notations}
Overall in the paper, we set $e(x)=\exp(2\pi ix)$, $T>1$, $p,q$ distinct odd primes, and $H=T^{\delta}$ for $\delta \in \lc \h,1 \rc$. $\sumstar$ denotes the sum over primitive Dirichlet characters, and $\tau(\chi)$ denotes the Gauss sum $\sum\limits_{n \bmod{q}} \chi(n)e(\tfrac{n}{q})$ for a Dirichlet character $\chi$ modulo $q$. Moreover, $\varepsilon$ denotes an arbitrarily small positive constant. 

\section{Preliminaries}
\begin{lem} \label{intbound}
    For $\epsilon >0$ be an arbitrary small, we have
    \begin{equation}
        \inftyInt  \lv \frac{\zeta\lc\tfrac{1}{2}-it\rc}{-\tfrac{1}{2}+it} \rv^2 \exp\lc-\frac{(t-T)^2}{H^2}\rc dt = O\lc \frac{H}{T^2}T^{\epsilon} \rc.
    \end{equation}
    \begin{proof}
        First, we split the integral at $|t-T|=H^{1+\epsilon}$. Then, for $|t-T|\leq H^{1+\epsilon}$, we have $|t| \asymp T$ and
        \begin{equation}
            \int_{|t-T|\leq H^{1+\epsilon}}\lv \frac{\zeta\lc\tfrac{1}{2}-it\rc}{-\tfrac{1}{2}+it} \rv^2 \exp\lc-\frac{(t-T)^2}{H^2}\rc dt \ll \frac{1}{T^2}\int_{|t-T|\leq H^{1+\epsilon}}\lv \zeta\lc\tfrac{1}{2}-it\rc \rv^2 \exp\lc-\frac{(t-T)^2}{H^2}\rc dt
        \end{equation}
        And, by the results in \cite{Atk49} and \cite{Jut83}, we have
        \begin{equation}
            \int_{|t-T|\leq H^{1+\epsilon}}\lv \zeta\lc\tfrac{1}{2}-it\rc \rv^2 \exp\lc-\frac{(t-T)^2}{H^2}\rc dt \ll \int_{|t-T|\leq H^{1+\epsilon}}\lv \zeta\lc\tfrac{1}{2}-it\rc \rv^2 dt \ll HT^{\epsilon}.
        \end{equation}
        Then
        \begin{equation}
            \int_{|t-T|\leq H^{1+\epsilon}}\lv \frac{\zeta\lc\tfrac{1}{2}-it\rc}{-\tfrac{1}{2}+it} \rv^2 \exp\lc-\frac{(t-T)^2}{H^2}\rc dt \ll \frac{H}{T^2}T^{\epsilon}.
        \end{equation}
        For the case $|t-T|>H^{1+\epsilon}$, we have using Lemma \ref{conv}
        \begin{equation}
        \begin{aligned}
            \int_{|t-T| > H^{1+\epsilon}}\lv \frac{\zeta\lc\tfrac{1}{2}-it\rc}{-\tfrac{1}{2}+it} \rv^2 \exp\lc-\frac{(t-T)^2}{H^2}\rc dt &\ll \int_{|t-T| > H^{1+\epsilon}} \frac{|t|^{\h+ \epsilon}}{1+t^2}  \exp\lc-H^{\epsilon}\rc dt\\
            &\ll \exp\lc-H^{\epsilon}\rc\int_{|t-T| > H^{1+\epsilon}} (1+|t|)^{-\frac{3}{2}+\epsilon} dt\\
            &\ll \exp\lc-H^{\epsilon}\rc.
        \end{aligned}
        \end{equation}
    \end{proof}
\end{lem}

\begin{lem} \label{bound}
    Let $f_{T,H}(y) = 1$ or $f_{T,H}(y) = Hy-iT$, $\psi_{s,T,H}(y)=-y^2 + \frac{2s-1}{H}y + i\frac{2T}{H}y$, and $\phi_{n}(y)=\frac{2\pi n q}{p}e^{\frac{2y}{H}}$. Then, the following expression
    \begin{equation}\label{lem1eq1}
        \mathcal{H}_{n,m}(s)=\infS \frac{1}{nm} \inftyInt f_{T,H}(y)\exp\lc \psi_{s,T,H}(y) \rc \sin \lc \phi_{nm}(y) \rc dy
    \end{equation}
    is defined and analytic on $D = \{s \in \mathbb{C} |\mathrm{Re}(s) \in \left( -\frac{1}{10}, \frac{3}{2} \right),\mathrm{Im}(s) \in \left( -1,1 \right) \}$. Moreover, the sum can be restricted to $n,m\leq T^{50}$ with an error at most $O\lc pq^{-1}T^{-47} \rc$. Furthermore, the expression
    \begin{equation}\label{lem1eq2}
        \mathcal{H}_n(s)=\sum_{n\geq 1} \frac{1}{n} \inftyInt f_{T,H}(y)\exp\lc \psi_{s,T,H}(y) \rc \sin \lc \phi_n(y) \rc dy
    \end{equation}
    is also analytic on the same domain $D$ and the sum can be restricted to $n\leq T^{50}$ with the error at most $O\lc pq^{-1}T^{-47} \rc$.
    \begin{proof}
        For $f_{T,H}(y) = Hy-iT$, set
        \begin{equation}
            I_{nm}(s)=\inftyInt f_{T,H}(y)\exp\lc \psi_{s,T,H}(y) \rc \sin \lc \phi_{nm}(y) \rc dy.
        \end{equation}
       We apply integration by parts on $I_{nm}(s)$ and get
        \begin{equation} \label{ibp}
        \begin{aligned}
            I_{nm}(s)&=-\inftyInt f_{T,H}(y)\exp\lc \psi_{s,T,H}(y) \rc \frac{1}{\phi_{nm}'(y)} \frac{d}{dy} \lc \cos \lc \phi_{nm}(y) \rc \rc dy\\
            &= \frac{pH}{4\pi nmq}\inftyInt \frac{d}{dy} \lc f_{T,H}(y)\exp\lc \psi_{s,T,H}(y) - \frac{2y}{H} \rc \rc \cos \lc \phi_{nm}(y) \rc dy\\
            &= \frac{pH}{4\pi nmq}\inftyInt P_{s,T,H}(y) \exp\lc \psi_{s,T,H}(y) - \frac{2y}{H} \rc \cos \lc \phi_{nm}(y) \rc dy,
        \end{aligned}
        \end{equation}
        where $P_{s,T,H}(y)=-2Hy^2+(2s-3+4iT)y+\frac{2T^2}{H}+H-i\frac{T}{H}(2s-3)$. Then, 
        \begin{equation} \label{eq1}
            \infS \lv \frac{1}{nm} I_{nm}(s) \rv \ll \infS \frac{pH}{q(nm)^2} \inftyInt \lv P_{s,T,H}(y) \rv \exp\lc -y^2 + \frac{2\text{Re}(s)-3}{H}y \rc dy.
        \end{equation}
        For $s \in D$, the coefficients of $P_{s,T,H}(y)$ is at most $T^2$. Thus, we have 
        \begin{equation}
            \inftyInt \lv P_{s,T,H}(y) \rv \exp\lc -y^2 + \frac{2\text{Re}(s)-3}{H}y \rc dy \ll T^2
        \end{equation}
        and
        \begin{equation}
            \infS \lv\frac{1}{nm} I_{nm}(s)\rv\ll \infS \frac{pHT^{2}}{q(nm)^2}  < \infty.
        \end{equation}
        This implies that $\mathcal{H}_{n,m}(s)=\sum\limits_{n,m\geq 1} \frac{1}{nm} I_{nm}(s)$ converges uniformly on $D$. Since $I_{nm}(s)$ is analytic on $D$ for each $n,m$, $\mathcal{H}_{n,m}(s)$ is analytic on $D$. We now restrict the sum to $n,m\leq T^{50}$. Decompose the sum into
        \begin{equation}
             \infS \lv\frac{1}{nm} I_{nm}(s)\rv = \sum_{n,m \leq T^{50}} \lv\frac{1}{nm} I_{nm}(s)\rv + 2\sum_{\substack{n \leq T^{50}\\ m>T^{50}}} \lv\frac{1}{nm} I_{nm}(s)\rv+ \sum_{n,m > T^{50}} \lv\frac{1}{nm} I_{nm}(s)\rv.
        \end{equation}
        Then, it remains to bound the last two terms. By \eqref{ibp}, we have
        \begin{equation}
            2\sum_{\substack{n \leq T^{50}\\ m>T^{50}}} \lv\frac{1}{nm} I_{nm}(s)\rv+ \sum_{n,m > T^{50}} \lv\frac{1}{nm} I_{nm}(s)\rv \ll HT^{2}\frac{p}{q} \lc \sum_{\substack{n \leq T^{50}\\ m>T^{50}}} \frac{1}{(nm)^2}+ \sum_{n,m > T^{50}} \frac{1}{(nm)^2} \rc.
        \end{equation}
        Since $\sum_{n\leq T^{50}} n^{-2} = O(1)$ and
        \begin{equation}
            \sum_{n> T^{50}} \frac{1}{n^2} \ll \int_{T^{50}}^{\infty}\frac{dt}{t^2} = \frac{1}{T^{50}},
        \end{equation}
        we have
        \begin{equation}
            2\sum_{\substack{n \leq T^{50}\\ m>T^{50}}} \lv\frac{1}{nm} I_{nm}(s)\rv + \sum_{n,m > T^{50}} \lv\frac{1}{nm} I_{nm}(s)\rv \ll HT^{2}\frac{p}{q}\lc \frac{1}{T^{50}} \rc \ll \frac{p}{qT^{47}}.
        \end{equation}
        A similar argument applies to $\mathcal{H}_n(s)$, restricting the sum to $n \le T^{50}$ with error $O(pq^{-1}T^{-47})$.
        
        Finally, when $f_{T,H}(y) = 1$, we obtain
        \begin{equation}
            P_{s,T,H}(y)=-2y+\frac{2s-3}{H}+i\frac{2T}{H}
        \end{equation}
        from the integration by parts step, and the same conclusion follows from the argument above.
    \end{proof}
\end{lem}

\begin{lem} \label{bound2}
    Let $P_j(y)$ be a polynomial of degree at most $j\geq1$, $\Delta_{\pm} = \frac{T}{2\pi}\pm \frac{nmq}{p}$, and
    \begin{equation}
        \mathcal{H}_{\pm}=\sum_{n,m\leq T^{50}} \frac{1}{nm} \inftyInt P_{j}(y) \exp \lc -y^2-\frac{y}{H} \rc e\lc \frac{Ty}{H\pi}\pm \frac{nmq}{p}e^{\frac{2y}{H}}\rc dy,
    \end{equation}
    then the sum in $\mathcal{H}_{\pm}$ can be restricted to $|\Delta_{\pm}| \leq H(pqH)^{\epsilon}$ with an error of $O((pqT)^{\epsilon}H^{-1})$. Moreover,
    \begin{equation}
        |\mathcal{H}_{+}| \ll (pqT)^{\epsilon}H^{-1}.
    \end{equation}
    \begin{proof}
        Define
        \begin{equation}
            I_{nm} = \inftyInt P_{j}(y) \exp \lc -y^2-\frac{y}{H} \rc e\lc \frac{Ty}{H\pi}\pm \frac{nmq}{p}e^{\frac{2y}{H}}\rc dy
        \end{equation}
        Restricting the integral to $|y|<(pqH)^{\epsilon}$ with a negligible error of $O\lc e^{-(pqH)^{\epsilon}} \rc$, we have
        \begin{equation}
            \exp\lc -\frac{y}{H}\rc = 1 + O\lc \frac{(pqH)^{\epsilon}}{H} \rc
        \end{equation}
        \begin{equation}
            \exp\lc \frac{2y}{H} \rc = 1 + \frac{2y}{H} + R_H(y)
        \end{equation}
        for
        \begin{equation}
            R_H(y) = \sum_{l=2}^{\infty} \lc\frac{2y}{H}\rc^l \frac{1}{l!}.
        \end{equation}
        Then
        \begin{equation} \label{l2eq1}
            I_{nm} = \int_{\lv y \rv \leq (pqH)^{\epsilon}} P_{j}(y) \exp \lc -y^2\rc e\lc \frac{Ty}{H\pi}\pm \frac{nmq}{p}\lc 1 + \frac{2y}{H} + R_H(y) \rc\rc dy + O\lc \frac{(pqH)^{\epsilon}}{H} \rc.
        \end{equation}
        Let
        \begin{equation} \label{stepA}
            \phi(y) = i\lc\frac{2Ty}{H}\pm \frac{2\pi nmq}{p}\lc 1 + \frac{2y}{H}\rc\rc
        \end{equation}
        and differentiate to get
        \begin{equation}
            \phi'(y)=\frac{4\pi i}{H}\lc\frac{T}{2\pi}\pm \frac{nmq}{p}\rc = \frac{4\pi i}{H}\Delta_{\pm}.
        \end{equation}
        Then we get
        \begin{equation}
            I_{nm} = \int_{\lv y \rv \leq (pqH)^{\epsilon}} P_{j}(y) \exp \lc -y^2 \pm \frac{2\pi i nmq}{p}R_H(y)\rc \exp\lc \phi(y)\rc dy + O\lc \frac{(pqH)^{\epsilon}}{H} \rc.
        \end{equation}
        Suppose $|\Delta_{\pm}| > H(pqH)^{\epsilon}$, apply the integration by part, and get
        \begin{equation}\label{restr}
        \begin{aligned}
            I_{nm} &= \int_{\lv y \rv \leq (pqH)^{\epsilon}} P_{j}(y) \exp \lc -y^2 \pm \frac{2\pi i nmq}{p}R_H(y)\rc \frac{1}{\phi'(y)} \frac{d}{dy} \exp\lc \phi(y)\rc dy + O\lc \frac{(pqH)^{\epsilon}}{H} \rc\\
            &= \frac{H}{4\pi i \Delta_{\pm}}\int_{\lv y \rv \leq (pqH)^{\epsilon}} P_{j}(y) \exp \lc -y^2 \pm \frac{2\pi i nmq}{p}R_H(y)\rc \frac{d}{dy} \exp\lc \phi(y)\rc dy + O\lc \frac{(pqH)^{\epsilon}}{H}\rc\\
            &= \frac{H}{4\pi i \Delta_{\pm}} \left[ \left. P_{j}(y) \exp \lc -y^2 \pm \frac{2\pi i nmq}{p}R_H(y)\rc \exp\lc \phi(y)\rc \right|_{-(pqH)^{\epsilon}}^{(pqH)^{\epsilon}} \right. \\
            &- \left. \int_{\lv y \rv \leq (pqH)^{\epsilon}} \exp\lc \phi(y)\rc \frac{d}{dy} \lc P_{j}(y) \exp \lc -y^2 \pm \frac{2\pi i nmq}{p}R_H(y)\rc \rc dy \right] + O\lc \frac{(pqH)^{\epsilon}}{H} \rc.
        \end{aligned}
        \end{equation}
        Observe that
        \begin{equation}
            \frac{H}{4\pi i \Delta_{\pm}} \left[ \left. P_{j}(y) \exp \lc -y^2 \pm \frac{2\pi i nmq}{p}R_H(y)\rc \exp\lc \phi(y)\rc \right|_{-(pqH)^{\epsilon}}^{(pqH)^{\epsilon}} \right] = O\lc e^{-(pqH)^{\epsilon}} \rc
        \end{equation}
        can be absorbed into the error term, and the integral is
        \begin{equation}
        \begin{aligned}
            &\int_{\lv y \rv \leq (pqH)^{\epsilon}} \exp\lc \phi(y)\rc \frac{d}{dy} \lc P_{j}(y) \exp \lc -y^2 \pm \frac{2\pi i nmq}{p}R_H(y)\rc \rc dy \\
            &= \int_{\lv y \rv \leq (pqH)^{\epsilon}} \exp\lc \phi(y)\rc \exp \lc -y^2 \pm \frac{2\pi i nmq}{p}R_H(y)\rc\lc P_{j-1}(y) + P_{j}(y) \lc -2y \pm \frac{2\pi i nmq}{p}R_H'(y) \rc \rc dy \\
            &= \int_{\lv y \rv \leq (pqH)^{\epsilon}} \exp \lc -y^2 \rc e\lc \frac{Ty}{H\pi}\pm \frac{nmq}{p}e^{\frac{2y}{H}}\rc \lc P_{j+1}(y) \pm \frac{2\pi i nmq}{p}P_j(y)R_H'(y) \rc dy.
        \end{aligned}
        \end{equation}
        Then we have
        \begin{equation} \label{stepZ}
        \begin{aligned}
            I_{nm} =&- \frac{H}{4\pi i \Delta_{\pm}} \left[ \int_{\lv y \rv \leq (pqH)^{\epsilon}} \exp \lc -y^2 \rc e\lc \frac{Ty}{H\pi}\pm \frac{nmq}{p}e^{\frac{2y}{H}}\rc \lc P_{j+1}(y) \pm \frac{2\pi i nmq}{p}P_j(y)R_H'(y) \rc dy \right]\\
            &+ O\lc \frac{(pqH)^{\epsilon}}{H} \rc.
        \end{aligned}
        \end{equation}
        Observe that this contains the expression
        \begin{equation} \label{expl2}
            \int_{\lv y \rv \leq (pqH)^{\epsilon}} \exp \lc -y^2 \rc e\lc \frac{Ty}{H\pi}\pm \frac{nmq}{p}e^{\frac{2y}{H}}\rc P_{j+1}(y)dy,
        \end{equation}
        which maintains the form of \eqref{l2eq1}. So we can repeat the arguments from \eqref{stepA} to \eqref{stepZ} for $k = \lceil\frac{100}{\epsilon}\rceil$ times and obtain
        \begin{equation} \label{stepB}
        \begin{aligned}
            I_{nm} &= \lc - \frac{H}{4\pi i \Delta_{\pm}}\rc^{k} \int_{\lv y \rv \leq (pqH)^{\epsilon}} \exp \lc -y^2 \rc e\lc \frac{Ty}{H\pi}\pm \frac{nmq}{p}e^{\frac{2y}{H}}\rc P_{j+k}(y)dy\\
            &+ \sum_{l=1}^{k} \lc \mp \frac{H}{4\pi i \Delta_{\pm}} \rc^{l} I_{nm,l} + O\lc \frac{(pqH)^{\epsilon}}{H} \rc
        \end{aligned}
        \end{equation}
        for
        \begin{equation}
             I_{nm,l} = \int_{\lv y \rv \leq (pqH)^{\epsilon}} \exp \lc -y^2 \rc e\lc \frac{Ty}{H\pi} \pm \frac{nmq}{p}e^{\frac{2y}{H}}\rc \frac{2\pi i nmq}{p}P_{j+l-1}(y)R_H'(y) dy.
        \end{equation}
        First, we bound the first term in \eqref{stepB}. Applying $|\Delta_{\pm}| > H(pqH)^{\epsilon}$, we must have
        \begin{equation}
        \begin{aligned}
            \left|\lc - \frac{H}{4\pi i \Delta_{\pm}}\rc^{k} \int_{\lv y \rv \leq (pqH)^{\epsilon}} \exp \lc -y^2 \rc \right. & \left. e\lc \frac{Ty}{H\pi}\pm \frac{nmq}{p}e^{\frac{2y}{H}}\rc P_{j+k}(y)dy\right|\\
            &\ll_k\frac{1}{(pqH)^{k\epsilon}} \int_{\lv y \rv \leq (pqH)^{\epsilon}} |P_{j+k}(y)| \exp \lc -y^2 \rc dy \\
            &\ll_k \frac{1}{(pqH)^{100}}.     
        \end{aligned}
        \end{equation}
        Then the sum of the integral contributes
        \begin{equation}\label{l2eq2}
            \lv \sum_{\substack{n,m\leq T^{50} \\ |\Delta_{\pm}| > H(pqH)^{\epsilon}}} \frac{1}{nm} \inftyInt P_{j}(y) \exp \lc -y^2\rc e\lc \frac{Ty}{H\pi}\pm \frac{nmq}{p}e^{\frac{2y}{H}}\rc dy \rv \ll_k  \frac{1}{(pqH)^{100}}\sum_{n,m\leq T^{50}} \frac{1}{nm}\ll_k \frac{ T^{\epsilon}}{(pqH)^{100}}.
        \end{equation}
        Now, we need to show the remaining terms are $O\lc \frac{(pqT)^{\epsilon}}{H} \rc$. Observe that $R_H'(y) = O\lc \frac{y}{H^2} \rc$. Then, for each $l>0$, we have
        \begin{equation}
            |I_{nm,l}| \ll \int_{\lv y \rv \leq (pqH)^{\epsilon}} \exp \lc -y^2 \rc \frac{nmq}{p}\lv P_{j+l-1}(y) \rv \frac{\lv y \rv}{H^2} dy \ll \frac{nmq}{pH^2}.
        \end{equation}
        Thus, substituting into the sum, we obtain
        \begin{equation}
        \begin{aligned}
            \lv\sum_{\substack{n,m\leq T^{50} \\ |\Delta_{\pm}| > H(pqH)^{\epsilon}}} \frac{1}{nm} \sum_{l=1}^{k} \lc \mp \frac{H}{4\pi i \Delta_{\pm}} \rc^{l} I_{nm,l}\rv &\ll \sum_{\substack{n,m\leq T^{50} \\ |\Delta_{\pm}| > H(pqH)^{\epsilon}}} \frac{1}{nm} \sum_{l=1}^{k} \lc\frac{H}{\lv \Delta_{\pm} \rv}\rc^l \frac{nmq}{pH^2} \\
            &= \frac{q}{pH^2}\sum_{l=1}^{k} H^l \sum_{\substack{n,m\leq T^{50} \\ |\Delta_{\pm}| > H(pqH)^{\epsilon}}} \frac{1}{\lv \Delta_{\pm} \rv^l}.
        \end{aligned}
        \end{equation}
        Let $r=nm$. Then the number of terms $nm=r$ is at most $d(r)\ll r^{\epsilon} \ll T^{\epsilon}$, and we obtain
        \begin{equation}
            \sum_{\substack{n,m\leq T^{50} \\ |\Delta_{\pm}| > H(pqH)^{\epsilon}}} \frac{1}{\lv \Delta_{\pm} \rv^l} \ll \sum_{\substack{r\leq T^{100} \\ |\frac{T}{2\pi}\pm \frac{rq}{p}| > H(pqH)^{\epsilon}}} \frac{T^{\epsilon}}{\lv\frac{T}{2\pi}\pm \frac{rq}{p}\rv^l} =\lc\frac{p}{q}\rc^l\sum_{\substack{r\leq T^{100} \\ |\frac{Tp}{2\pi q}\pm r| > \frac{pH}{q}(pqH)^{\epsilon}}} \frac{T^{\epsilon}}{\lv\frac{Tp}{2\pi q}\pm r\rv^l}.
        \end{equation}
        For $N_r \leq |\frac{pT}{2\pi q} \pm r| \leq N_r+1$, we have $\frac{1}{\lv\frac{pT}{2\pi q}- r\rv} \leq \frac{1}{N_r}$ and
        \begin{equation}
             \lc\frac{p}{q}\rc^l\sum_{\substack{r\leq T^{100} \\ |\frac{Tp}{2\pi q}\pm r| > \frac{pH}{q}(pqH)^{\epsilon}}} \frac{T^{\epsilon}}{\lv\frac{Tp}{2\pi q}\pm r\rv^l} \ll \lc\frac{p}{q}\rc^l\sum_{\frac{pH}{q}(pqH)^{\epsilon} < N_r < T^{101} }\frac{T^{\epsilon}}{N_r^l}.
        \end{equation}
        Now, for $l=1$, we have
        \begin{equation}
            \lc\frac{p}{q}\rc\sum_{\frac{pH}{q}(pqH)^{\epsilon} < N_r < T^{101} } \frac{T^{\epsilon}}{N_r} \ll \lc\frac{pT^{\epsilon}}{q}\rc \sum_{1 \leq N_r < T^{101} } \frac{1}{N_r} \ll \lc\frac{pT^{\epsilon}}{q}\rc T^{\epsilon} \ll \frac{pT^{\epsilon}}{q}.
        \end{equation}
        For $l\geq2$, we have
        \begin{equation}
            \lc\frac{p}{q}\rc^l\sum_{\frac{pH}{q}(pqH)^{\epsilon} < N_r < T^{101} } \frac{T^{\epsilon}}{N_r^l} \ll \lc\frac{p}{q}\rc^l T^{\epsilon} \sum_{N_r > \frac{pH}{q}(pqH)^{\epsilon}} \frac{1}{N_r^l} \ll \lc\frac{p}{q}\rc^l T^{\epsilon} \int_{\frac{pH(pqH)^{\epsilon}}{q}}^{\infty} \frac{1}{x^l}dx \ll \frac{p}{q} \frac{(pqT)^{\epsilon}}{H^{l-1}}.
        \end{equation}
        Thus, we have
        \begin{equation}
        \begin{aligned}
            \frac{q}{pH^2}\sum_{l=1}^{k} H^l \sum_{\substack{n,m\leq T^{50} \\ |\Delta_{\pm}| > H(pqH)^{\epsilon}}} \frac{1}{\lv \Delta_{\pm} \rv^l} &=\frac{q}{pH^2} \lc H \sum_{\substack{n,m\leq T^{50} \\ |\Delta_{\pm}| > H(pqH)^{\epsilon}}} \frac{1}{\lv \Delta_{\pm} \rv} + \sum_{l=2}^{k} H^l \sum_{\substack{n,m\leq T^{50} \\ |\Delta_{\pm}| > H(pqH)^{\epsilon}}} \frac{1}{\lv \Delta_{\pm} \rv^l}\rc\\
            &\ll \frac{q}{pH^2} \lc H\frac{pT^{\epsilon}}{q} + \sum_{l=2}^{k} H^l \frac{p}{q} \frac{(pqT)^{\epsilon}}{H^{l-1}}\rc\\
            &= \frac{1}{H^2} \lc HT^{\epsilon} + \sum_{l=2}^{k} H(pqT)^{\epsilon}\rc \\
            &\ll_k \frac{(pqT)^{\epsilon}}{H}.
        \end{aligned}
        \end{equation}
        Now, for the error term $O\lc \frac{(pqH)^{\epsilon}}{H} \rc$, we have
        \begin{equation}
            \sum_{n,m\leq T^{50}} \frac{1}{nm} \frac{(pqH)^{\epsilon}}{H} \ll \frac{(pqT)^{\epsilon}}{H}.
        \end{equation}
        Finally, for $\mathcal{H}_{+}$, the restriction $|\Delta_{+}| \leq H(pqH)^{\epsilon}$ implies $\frac{nmq}{p} \asymp -T$, which is impossible for $n,m,p,q,T>0$. Thus, the bound applies to the entire sum for the $\mathcal{H}_+$ case.
    \end{proof}
\end{lem}

\begin{lem} \label{a1}
    Let $P_j(y)$ be a polynomial of degree at most $j\geq1$, $\Delta_{nm} = \frac{T}{2\pi}- \frac{nmq}{p}$, and
    \begin{equation} \label{l3eq1}
        \mathcal{H} = \sum_{\lv \Delta_{nm} \rv \leq H(pqH)^{\epsilon}}\frac{1}{nm} \inftyInt P_j(y) \exp \lc -y^2 - \frac{y}{H}\rc e \lc \frac{Ty}{H\pi} - \frac{nmq}{p}e^{\frac{2y}{H}} \rc dy.
    \end{equation}
    Then, we have
    \begin{equation}\label{l3eq2}
    \begin{aligned}
        \mathcal{H} =&\sum_{\lv \Delta_{nm} \rv \leq H(pqH)^{\epsilon}}\frac{1}{nm} \inftyInt P_j(y) \exp \lc -y^2 \rc e \lc \frac{Ty}{H\pi} - \frac{2nmqy}{pH} \rc e\lc -\frac{nmq}{p} \rc  dy + O\lc \frac{(pqH)^{\epsilon}}{H} \rc.
    \end{aligned}
    \end{equation}
    \begin{proof}
        Observe that $nm \asymp \frac{Tp}{q}$ and the sum of $nm$ lies in an interval of length $\frac{pH(pqH)^{\epsilon}}{q}$. Then we have
        \begin{equation} \label{l3eq3}
            \sum_{\lv \Delta_{nm} \rv \leq H(pqH)^{\epsilon}}\frac{1}{nm} \ll \lc \frac{pH(pqH)^{\epsilon}}{q} \rc \lc \frac{1}{\frac{Tp}{q}} \rc = \frac{H(pqH)^{\epsilon}}{T}.
        \end{equation}
        Restricting the integral to $|y| < (pqH)^{\epsilon}$ with a negligible error of $O\lc e^{-(pqH)^{\epsilon}} \rc$ and expanding $\exp\lc-\frac{y}{H}\rc$, we get
        \begin{equation}\label{l3eq4}
            \exp\lc -\frac{y}{H} \rc = 1 + O\lc \frac{y}{H} \rc = 1 + O\lc \frac{(pqH)^{\epsilon}}{H} \rc.
        \end{equation}
        Then we obtain
        \begin{equation} \label{l3eq5}
        \begin{aligned}
            \inftyInt P_j(y) & \exp \lc -y^2 - \frac{y}{H}\rc e \lc \frac{Ty}{H\pi} - \frac{nmq}{p}e^{\frac{2y}{H}} \rc dy \\
            &= \int_{\lv y \rv \leq (pqH)^{\epsilon}} P_j(y) \exp \lc -y^2\rc e \lc \frac{Ty}{H\pi} - \frac{nmq}{p}e^{\frac{2y}{H}} \rc dy + O\lc \frac{(pqH)^{\epsilon}}{H} \rc.
        \end{aligned}
        \end{equation}
        Substituting the error in \eqref{l3eq5} into \eqref{l3eq1}, we obtain an error term of size
        \begin{equation}
            \sum_{\lv \Delta_{nm} \rv \leq H(pqH)^{\epsilon}}\frac{1}{nm}\frac{(pqH)^{\epsilon}}{H} \ll \frac{H(pqH)^{\epsilon}}{T}\frac{(pqH)^{\epsilon}}{H} = \frac{(pqH)^\epsilon}{T}.
        \end{equation}
        Thus, we get
        \begin{equation} \label{l3eq6}
            \mathcal{H} = \sum_{\lv \Delta_{nm} \rv \leq H(pqH)^{\epsilon}}\frac{1}{nm} \int_{\lv y \rv \leq (pqH)^{\epsilon}} P_j(y) \exp \lc -y^2\rc e \lc \frac{Ty}{H\pi} - \frac{nmq}{p}e^{\frac{2y}{H}} \rc dy + O\lc \frac{(pqH)^\epsilon}{T} \rc.
        \end{equation}
        Since $\frac{nmq}{p} \asymp T$ and $\frac{T}{H^2}<1$, we obtain
        \begin{align} \label{taylor3}
            e \lc - \frac{nmq}{p}e^{\frac{2y}{H}} \rc =  e\lc -\frac{nmq}{p} \rc e\lc -\frac{2nmqy}{pH} \rc + O\lc \frac{T(pqH)^{\epsilon}}{H^2} \rc.
        \end{align}
        Then we get
        \begin{equation} \label{l3eq7}
        \begin{aligned}
            \int_{\lv y \rv \leq (pqH)^{\epsilon}} P_j(y) & \exp \lc -y^2\rc e \lc \frac{Ty}{H\pi} - \frac{nmq}{p}e^{\frac{2y}{H}} \rc dy\\
            &= \int_{\lv y \rv \leq (pqH)^{\epsilon}} P_j(y) \exp \lc -y^2\rc e \lc \frac{Ty}{H\pi} -\frac{2nmqy}{pH} \rc e\lc -\frac{nmq}{p} \rc dy + O\lc \frac{T(pqH)^{\epsilon}}{H^2} \rc
        \end{aligned}
        \end{equation}
        Substituting \eqref{l3eq7} into \eqref{l3eq6} and expanding the integral back to infinity with a negligible error, we obtain
        \begin{equation}
            \mathcal{H} = \sum_{\lv \Delta_{nm} \rv \leq H(pqH)^{\epsilon}}\frac{1}{nm} \inftyInt P_j(y) \exp \lc -y^2\rc e \lc \frac{Ty}{H\pi} -\frac{2nmqy}{pH} \rc e\lc -\frac{nmq}{p} \rc dy + O\lc \frac{(pqH)^\epsilon}{H} \rc.
        \end{equation}
    \end{proof}
\end{lem}

\begin{lem}  [Mellin Inversion] \cite[Chapter III, Theorem 2]{Won01} \label{mellininversion}
    Suppose that $f:(0,\infty) \xrightarrow{}\mathbb{R}$ is continuous and the integral
    \begin{equation}
        \phi(w) = \int_{0}^{\infty} x^{w-1} f(x)dx
    \end{equation}
    is absolutely convergent for $\mathrm{Re}(s) \in (a,b)$. Then
    \begin{equation}
        f(x) = \frac{1}{2\pi i} \int_{(c)} x^{-w} \phi(w)dw
    \end{equation}
    for $c \in (a,b)$
\end{lem}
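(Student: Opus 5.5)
The plan is to reduce the statement to the Fourier inversion theorem on $\mathbb{R}$ by an exponential change of variables.

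\textbf{Step 1: substitution.} Fix $c\in(a,b)$ and put $x=e^{u}$, so $dx/x = du$. Define
\begin{equation*}
g_c(u)=e^{cu}f(e^{u}), \qquad u\in\mathbb{R}.
\end{equation*}
Writing $w=c+i\tau$, we get
\begin{equation*}
\phi(c+i\tau)=\int_{-\infty}^{\infty} g_c(u)\,e^{i\tau u}\,du .
\end{equation*}
Absolute convergence of the Mellin integral at $\mathrm{Re}(w)=c$ is exactly the statement that $g_c\in L^1(\mathbb{R})$. So $\tau\mapsto\phi(c+i\tau)$ is the Fourier transform $\widehat{g_c}(-\tau)$ of an integrable, continuous function. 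In particular it is bounded and continuous in $\tau$.

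\textbf{Step 2: Fourier inversion.} Apply Fourier inversion to $g_c$:
\begin{equation*}
g_c(u)=\frac{1}{2\pi}\int_{-\infty}^{\infty}\phi(c+i\tau)\,e^{-i\tau u}\,d\tau .
\end{equation*}
Multiplying by $e^{-cu}$ and returning to $x=e^{u}$ gives
\begin{equation*}
f(x)=\frac{1}{2\pi}\int_{-\infty}^{\infty}\phi(c+i\tau)\,x^{-c-i\tau}\,d\tau .
\end{equation*}
Since $dw=i\,d\tau$ on the line $\mathrm{Re}(w)=c$, this is
\begin{equation*}
f(x)=\frac{1}{2\pi i}\int_{(c)}x^{-w}\phi(w)\,dw .
\end{equation*}

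\textbf{Step 3: justify inversion (main obstacle).} This is the only delicate point. Inversion holds pointwise at every $u$ under either of the following hypotheses:
\begin{itemize}
\item $\widehat{g_c}\in L^1$, since $g_c$ is continuous; or
\item $g_c$ has locally bounded variation, with the integral taken as a symmetric limit $\lim_{R\to\infty}\int_{-R}^{R}$.
\end{itemize}
Continuity of $g_c$ ensures the value recovered is $f$ itself rather than an average of one-sided limits. I would first verify that whichever hypothesis is needed holds in the applications.

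In this paper, $f$ is the Gaussian-type weight $\exp\lc -(2\pi x-T)^2/H^2\rc$. Its Mellin transform $G_{T,H}$ decays rapidly along vertical lines, as recorded later in Lemma \ref{weight}. So $\phi(c+i\tau)\in L^1(d\tau)$ and the first hypothesis applies. Alternatively, one can insert a Gaussian damping factor $e^{-\eta\tau^2}$ into the inversion integral. This turns it into a convolution of $g_c$ with a heat kernel, which converges to $g_c(u)$ as $\eta\to0$ by continuity of $g_c$. One can then let $\eta\to0$, using dominated convergence when $\phi(c+\cdot)\in L^1$.

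\textbf{Step 4: independence of $c$.} Finally, the result holds for every $c\in(a,b)$, since Steps 1–3 used only absolute convergence on the line $\mathrm{Re}(w)=c$.
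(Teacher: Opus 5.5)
This is the standard proof of the result, which the paper only cites from Wong without giving an argument of its own. Your reduction to Fourier inversion through $x=e^{u}$, $g_c(u)=e^{cu}f(e^{u})$, is correct, as are Steps 1, 2 and 4 and both inversion criteria you list in Step~3.

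What you should sharpen is the status of Step~3. The extra hypothesis is not something to verify later in applications; it has to be part of the lemma. As printed, with only continuity plus absolute convergence in a strip, the statement is false. For instance, take $f(x)=g(\log x)$ with $g$ real, continuous, compactly supported, and with Fourier integral divergent at $0$ (a localized version of du Bois-Reymond's example). Then $\phi$ is entire, but $\frac{1}{2\pi i}\int_{c-iR}^{c+iR}\phi(w)\,dw$, the inversion integral at $x=1$, has no limit as $R\to\infty$.

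A correct version of the lemma assumes one of two things:
\begin{itemize}
\item $f$ has bounded variation near $x$, with the line integral read as a symmetric limit and with value $\tfrac12\lc f(x^{+})+f(x^{-})\rc$; or
\item $\phi(c+i\cdot)\in L^{1}$.
\end{itemize}

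The paper only uses $f(x)=\exp\lc -(2\pi x-T)^2/H^2\rc$ on lines $c>1$. There Lemma~\ref{weight} makes $G_{T,H}(c+i\tau)$ decay faster than any power of $|\tau|$, so your first criterion applies. Your heat-kernel argument is in fact a complete proof in this case. The resulting absolute convergence is also what justifies the paper's later interchange of $\sum_{n,m}$ with $\int_{(c)}$.
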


\begin{lem}\label{mellin2}
    For $\mathrm{Re}(w)=\sigma>0$, $\GT(w)$,defined in \eqref{GTH}, is absolutely convergent. Moreover
    \begin{equation}\label{GSigma}
        \GT(\sigma)\asymp HT^{\sigma-1}
    \end{equation}
    for fixed $\sigma>0$. In particular,
    \begin{equation}\label{G1}
        \GT(1) \asymp H,
    \end{equation}
    and
    \begin{equation}\label{G'1}
        \GT'(1)\asymp H\log T.
    \end{equation}
\end{lem}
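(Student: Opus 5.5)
Our plan is a change of variables followed by a Laplace-type evaluation of a Gaussian integral. For $\sigma>0$ the integrand $\exp(-(2\pi x-T)^2/H^2)x^{\sigma-1}$ is continuous and positive on $(0,\infty)$. Near $0$ it is bounded by $x^{\sigma-1}$, which is integrable since $\sigma>0$. At infinity the Gaussian factor kills any power. Hence $\GT(w)$ converges absolutely for $\mathrm{Re}(w)>0$, and since $|x^{w-1}|=x^{\sigma-1}$ the same argument gives analyticity there.

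For the size, substitute $2\pi x = T+Hu$, so that
\begin{equation*}
\GT(\sigma)=\frac{H}{2\pi}\lc\frac{T}{2\pi}\rc^{\sigma-1}\int_{-T/H}^{\infty} e^{-u^2}\lc 1+\frac{Hu}{T}\rc^{\sigma-1}du .
\end{equation*}
We split the range of $u$ into three pieces.
\begin{itemize}
\item On $|u|\le (T/H)^{1/2}$, we have $Hu/T\le (H/T)^{1/2}\to0$ because $\delta<1$. So $(1+Hu/T)^{\sigma-1}=1+O((H/T)^{1/2})$, and this piece equals $\sqrt{\pi}+o(1)$.
\item On $(T/H)^{1/2}<|u|$ with $u\ge -T/(2H)$, the factor $(1+Hu/T)^{\sigma-1}$ is at most polynomial in $|u|$ (or bounded by $2^{|\sigma-1|}$). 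The Gaussian then makes this piece $O(e^{-T/(2H)}\cdot \mathrm{poly})=o(1)$.
\item On $-T/H<u<-T/(2H)$, we have $e^{-u^2}\le e^{-T^2/(4H^2)}$. The integral of $(1+Hu/T)^{\sigma-1}$ over this range is $\frac{T}{H}\int_0^{1/2}v^{\sigma-1}dv<\infty$ since $\sigma>0$, so this piece is negligible.
\end{itemize}
Therefore the integral is $\sqrt\pi+o(1)$, giving $\GT(\sigma)\asymp HT^{\sigma-1}$ for fixed $\sigma$, with constants depending on $\sigma$. Taking $\sigma=1$ yields $\GT(1)\asymp H$; in fact $\GT(1)=\frac{H}{2\pi}\int_{-T/H}^\infty e^{-u^2}du$ exactly, which is $\frac{H}{2\sqrt\pi}(1+o(1))$.

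For the derivative, differentiate under the integral sign, which is justified by absolute convergence near $\sigma=1$:
\begin{equation*}
\GT'(1)=\int_0^\infty \exp\lc-\frac{(2\pi x-T)^2}{H^2}\rc\log x\,dx=\frac{H}{2\pi}\int_{-T/H}^\infty e^{-u^2}\left[\log\frac{T}{2\pi}+\log\lc1+\frac{Hu}{T}\rc\right]du.
\end{equation*}
The first term is $\log\frac{T}{2\pi}\cdot\GT(1)\asymp H\log T$. For the second term, on the central range $|u|\le (T/H)^{1/2}$ the logarithm is $O((H/T)^{1/2})$. On the rest, $|\log(1+Hu/T)|$ grows at most logarithmically, except near $u=-T/H$, where it is integrable and is multiplied by $e^{-T^2/(4H^2)}$. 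So this term is $O(H)$, which gives $\GT'(1)\asymp H\log T$ for $T$ large. For bounded $T$ the statement is trivial after adjusting constants, interpreting $\log T$ as $\gg1$.

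The main, though mild, obstacle is the left endpoint $x\to0$, i.e.\ $u\to -T/H$. Here $x^{\sigma-1}$ or $\log x$ may blow up, and we must check that the Gaussian factor $e^{-T^2/H^2}$ overwhelms it. This is exactly where the hypothesis $\sigma>0$ is used.
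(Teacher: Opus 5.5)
Your proposal is correct and follows essentially the paper's route: the same substitution $2\pi x=T+Hu$, a three-way split of the $u$-range, and the same treatment of the endpoint piece $-T/H<u<-T/(2H)$ via the bound $\frac{T}{H}e^{-T^2/(4H^2)}\int_0^{1/2}v^{\sigma-1}\,dv$. The only difference is that you extract the asymptotic $\sqrt{\pi}+o(1)$ using $H/T\to0$ (so for $T$ large), whereas the paper proves two-sided bounds directly, with the lower bound coming from $u\in[0,1]$; those bounds hold uniformly for all $T>1$.

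One small correction: you say bounded $T$ is handled in $\GT'(1)\asymp H\log T$ by adjusting constants, and that is not right. For $T$ near $1$, essentially all the weight $\exp(-(2\pi x-T)^2/H^2)$ lies in $x<1$, so $\GT'(1)<0$. This part of the lemma therefore genuinely needs $T$ sufficiently large, and the same caveat applies to the paper's proof.
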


\begin{proof}
    To show $\GT(w)$ is absolutely convergent for $\sigma>0$, it is enough to show
    \begin{equation}
        |\GT(w)| \leq \int_{0}^{\infty} e^ {-\frac{(2\pi x - T)^2}{H^2}} x^{\sigma-1} dx < \infty.
    \end{equation}
    For $x \in (1,\infty)$, we have
    \begin{equation}
        \int_{1}^{\infty} e^ {-\frac{(2\pi x - T)^2}{H^2}} x^{\sigma-1} dx \ll \int_{1}^{\infty} e^ {-\frac{(2\pi x - T)^2}{H^2}} x^{\lv\sigma-1\rv} dx < \infty.
    \end{equation}
    For $x \in (0,1]$, we have
    \begin{equation}
        \int_{0}^{1} e^ {-\frac{(2\pi x - T)^2}{H^2}} x^{\sigma-1} dx \ll \int_{0}^{1} x^{\sigma-1} dx = \frac{1}{\sigma} < \infty
    \end{equation}
    for $\sigma>0$. Now, to prove \eqref{GSigma}, let $y=(2\pi x-T)/H$. Then
    \begin{equation}\label{Gy}
        \GT(w)=\frac{H}{2\pi}\int_{-T/H}^{\infty}e^{-y^2}\left(\frac{T+Hy}{2\pi}\right)^{w-1}dy.
    \end{equation}
    Let $w=\sigma$, then we have
    \begin{equation}
        \GT(\sigma)=\frac{H}{2\pi}\lc\frac{T}{2\pi}\rc^{\sigma-1}\int_{-T/H}^{\infty}e^{-y^2}\left(1+\frac{Hy}{T}\right)^{\sigma-1}dy.
    \end{equation}
    We need to show
    \begin{equation}
        \int_{-T/H}^{\infty}e^{-y^2}\left(1+\frac{Hy}{T}\right)^{\sigma-1}dy \asymp 1.
    \end{equation}
    For the lower bound, we first observe that $1\leq \left(1+\frac{Hy}{T}\right) \leq 2$ for $y \in [0,1]$. Thus, we have
    \begin{equation}
        \left(1+\frac{Hy}{T}\right)^{\sigma-1} \geq
        \begin{cases}
        1 & \text{if }\sigma-1 \geq 0 \\
        2^{\sigma-1} & \text{if } \sigma-1 < 0,
        \end{cases}
    \end{equation}
    where both cases are constants for fixed $\sigma$. Then,
    \begin{equation}
        \int_{-T/H}^{\infty}e^{-y^2}\left(1+\frac{Hy}{T}\right)^{\sigma-1}dy \gg \int_{0}^{1}e^{-y^2}\left(1+\frac{Hy}{T}\right)^{\sigma-1}dy \gg \int_{0}^{1}e^{-y^2}dy \gg 1.
    \end{equation}
    For the upper bound, we split the interval of integral into $(0,\infty)$, $\left[ -\frac{T}{2H},0 \right]$, and $\left[ -\frac{T}{H}, -\frac{T}{2H} \right]$. In the first interval, we have $\left(1+\frac{Hy}{T}\right)^{\sigma-1} \leq \left(1+\frac{Hy}{T}\right)^{\lv\sigma-1\rv}$. So we obtain
    \begin{equation}
         \int_{0}^{\infty}e^{-y^2}\left(1+\frac{Hy}{T}\right)^{\sigma-1}dy \ll \int_{0}^{\infty}e^{-y^2}\left(1+\frac{Hy}{T}\right)^{\lv\sigma-1\rv} dy \ll 1.
    \end{equation}
    For $\left[ -\frac{T}{2H},0 \right]$, we have $1+\frac{Hy}{T} \in \left[ \h,1 \right]$. This implies the following
    \begin{equation}
        \left(1+\frac{Hy}{T}\right)^{\sigma-1} \leq
        \begin{cases}
        1 & \text{if }\sigma-1 \geq 0 \\
        \lc \h \rc^{\sigma-1} & \text{if } \sigma-1 < 0,
        \end{cases}
    \end{equation}
    where both cases are constants for fixed $\sigma$. Thus
    \begin{equation}
        \int_{-T/2H}^{0}e^{-y^2}\left(1+\frac{Hy}{T}\right)^{\sigma-1}dy \ll \int_{-T/2H}^{0}e^{-y^2} dy \ll 1.
    \end{equation}
    Now, for $\left[ -\frac{T}{H}, -\frac{T}{2H} \right]$,let $u=1+\frac{H}{T}y$. Then $y = \frac{T}{H}(u-1)$ and $dy = \frac{T}{H}du$. So we obtain
    \begin{equation} \label{cov1}
    \begin{aligned}
        \int_{-T/H}^{-T/2H}e^{-y^2}\lc 1+\frac{Hy}{T} \rc^{\sigma-1} dy  &= \frac{T}{H} \int_{0}^{1/2} e^{-\frac{T^2}{H^2}(u-1)^2} u^{\sigma-1} du.
    \end{aligned}
    \end{equation}
    Observe that $-(u-1)^2 \leq -\frac{1}{4}$. Then we obtain
    \begin{equation} \label{cov2}
        \frac{T}{H} \int_{0}^{1/2} e^{-\frac{T^2}{H^2}(u-1)^2}  u^{\sigma-1} du \ll \frac{T}{H}e^{-\frac{T^2}{4H^2}} \int_{0}^{1/2}  u^{\sigma-1} du = \frac{T}{\sigma H}e^{-\frac{T^2}{4H^2}} \lc\h\rc^{\sigma} \ll 1.
    \end{equation}
    For \eqref{G1}, it follows from substituting $\sigma=1$ into \eqref{GSigma}. Finally, for \eqref{G'1}, we have
    \begin{equation}
    \begin{aligned}
        \GT'(1) &= \frac{H}{2\pi}\int_{-T/H}^{\infty}e^{-y^2} \log \lc \frac{T+Hy}{2\pi} \rc dy\\ &= \frac{H}{2\pi}\lc\int_{-T/H}^{\infty}e^{-y^2} \log \lc \frac{T}{2\pi} \rc dy + \int_{-T/H}^{\infty}e^{-y^2} \log \lc 1+\frac{Hy}{T} \rc dy \rc.
    \end{aligned}
    \end{equation}
    Observe that
    \begin{equation}
        \int_{-T/H}^{\infty}e^{-y^2} \log \lc \frac{T}{2\pi} \rc dy \asymp \log T.
    \end{equation}
    Thus, we just need to show the second integral is $O(1)$. For $y \in \lc 0,\infty \rc$, we have
    \begin{equation}
        \int_{0}^{\infty}e^{-y^2} \lv \log \lc 1+\frac{Hy}{T} \rc \rv dy \ll \int_{0}^{\infty}e^{-y^2} \frac{Hy}{T} dy \ll \frac{H}{T}\ll 1.
    \end{equation}
    For $y \in \left[ -\frac{T}{2H},0 \right]$, we have $\lv \log \lc 1+\frac{Hy}{T} \rc \rv \leq \log 2$. Thus
    \begin{equation}
        \int_{-T/2H}^{0}e^{-y^2}\lv \log \lc 1+\frac{Hy}{T} \rc \rv dy \ll  \int_{-T/2H}^{0}e^{-y^2} dy \ll 1
    \end{equation}
    For $y \in \left[ -\frac{T}{H}, -\frac{T}{2H} \right]$, we apply the same change of variable in \eqref{cov1}. Then we obtain
    \begin{equation}
    \begin{aligned}
        \int_{-T/H}^{-T/2H}e^{-y^2}\lv \log \lc 1+\frac{Hy}{T} \rc \rv dy &= \frac{T}{H} \int_{0}^{1/2} e^{-\frac{T^2}{H^2}(u-1)^2} \lv \log u \rv du.
    \end{aligned}
    \end{equation}
    With $-(u-1)^2 \leq -\frac{1}{4}$, we get
    \begin{equation}
        \frac{T}{H} \int_{0}^{1/2} e^{-\frac{T^2}{H^2}(u-1)^2} \lv \log u \rv du \ll \frac{T}{H}e^{-\frac{T^2}{4H^2}} \int_{0}^{1/2} \lv \log u \rv du = \frac{T}{H}e^{-\frac{T^2}{4H^2}} \lc \frac{\log 2}{2} + \h\rc \ll 1.
    \end{equation}
\end{proof}

\begin{lem} \label{weight}
    Let $\mathrm{Re}(w)=\sigma>0$ fixed. Then for every integer $n>0$, we have 
    \begin{equation}
        \lv G_{T,H}(w) \rv \ll_{n} HT^{\sigma -1}\lc \frac{\lc\frac{T}{H}\rc^n}{(1+|t|)^n} \rc.
    \end{equation}
    In particular, if $\lv t\rv \gg \frac{T}{H}$, then $G_{T,H}(w)$ decays faster than any fixed power of $\frac{\lc\frac{T}{H}\rc}{|t|}$.
    \begin{proof}
        let $y = \frac{2\pi x - T}{H}$, then we obtain $\frac{d}{dx}=\frac{2\pi}{H}\frac{d}{dy}$ and
        \begin{equation}
            \frac{d^n}{dx^n} \exp\lc -\frac{(2\pi x - T)^2}{H^2} \rc = \lc\frac{2\pi}{H}\rc^n\frac{d^n}{dy^n}\exp(-y^2) = \lc\frac{2\pi}{H}\rc^nP_n(y)\exp(-y^2),
        \end{equation}
        for $P_n(y)$ is a polynomial of degree $n$. Back to $G\lc \sigma + it \rc$, we apply integration by part $n$ times and get
        \begin{equation*}
        \begin{aligned}
            G_{T,H}\lc \sigma+it \rc &= -\int_{0}^{\infty} \frac{x^{\sigma + it -1 +1}}{\sigma -1 + it +1} \frac{d}{dx} \exp\lc -\frac{(2\pi x - T)^2}{H^2} \rc dx\\
            &=(-1)^{n} \int_{0}^{\infty} \frac{x^{\sigma + it -1 +n}}{\prod_{k=1}^{n}\lc \sigma + it -1 +k\rc} \frac{d^n}{dx^n} \exp\lc -\frac{(2\pi x - T)^2}{H^2} \rc dx\\
            &=(-1)^{n} \int_{-\frac{T}{H}}^{\infty} \frac{\lc \frac{T+Hy}{2\pi} \rc^{\sigma + it -1  +n}}{\prod_{k=1}^{n}\lc \sigma + it -1 +k\rc} \lc\frac{2\pi}{H}\rc^n P_n\lc y \rc\exp\lc -y^2 \rc \frac{H}{2\pi} dy.
        \end{aligned}
        \end{equation*}
        Since $\sigma-1+n >0$, we have 
        \begin{equation}
            \lv(T+Hy)^{\sigma-1+n}\rv < T^{\sigma - 1 + n}(1+|y|)^{\sigma - 1 + n}.
        \end{equation}
        Moreover, 
        \begin{equation}
            \sqrt{(\sigma-1+k)^2+t^2}\geq \frac{1}{\sqrt{2}}\lc \sigma-1+k +|t| \rc \gg 1+|t|
        \end{equation}
        for each $k$. Therefore
        \begin{align*}
            \lv G_{T,H}\lc \sigma + it \rc \rv &\ll_{n} \frac{H}{H^n}\int_{-\frac{T}{H}}^{\infty} \lv\frac{(T+Hy)^{\sigma-1+n}}{\prod_{k=1}^{n}\lc \sigma + it -1 +k\rc} P_n\lc y \rc \exp\lc -y^2 \rc \rv dy\\
            & \ll \frac{H}{H^n} \int_{-\frac{T}{H}}^{\infty} \frac{T^{\sigma - 1 + n}(1+|y|)^{\sigma - 1 + n}}{(1+|t|)^n} \lv P_n\lc y \rc \rv \exp\lc -y^2 \rc dy \\
            & = \frac{HT^{\sigma - 1 + n}}{H^n(1+|t|)^n} \int_{-\frac{T}{H}}^{\infty} (1+|y|)^{\sigma - 1 + n}\lv P_n\lc y \rc \rv \exp\lc -y^2 \rc dy \\
            & \ll HT^{\sigma - 1 }\lc \frac{\lc\frac{T}{H}\rc^n}{(1+|t|)^n} \rc.
        \end{align*}
    \end{proof}
\end{lem}

\begin{lem}[Functional equation] \cite[Chapter 9]{Dav80}\label{functional}
Let $\chi$ be a primitive character modulo $p$, then we have
\begin{equation}\label{fe}
  L(w,\chi)
  =\frac{\tau(\chi)}{i^{\alpha}\sqrt p}
   \left(\frac{p}{\pi}\right)^{\h-w}
   \frac{\Gamma\left(\frac{1-w+\alpha}{2}\right)}
        {\Gamma\left(\frac{w+\alpha}{2}\right)}
   L(1-w,\overline\chi),
\end{equation}
for $\alpha= 0$ if $\chi(-1)=1$ and $\alpha=1$ if $\chi(-1)=-1$.
\end{lem}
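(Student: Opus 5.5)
The plan is to derive the functional equation from the standard theory of primitive Dirichlet characters in \cite[Chapter 9]{Dav80}, treating the even and odd cases in parallel via the parameter $\alpha$. I will first establish the symmetric form, namely that the completed function
\begin{equation*}
\xi(w,\chi)=\lc\frac{p}{\pi}\rc^{\frac{w+\alpha}{2}}\Gamma\lc\frac{w+\alpha}{2}\rc L(w,\chi)
\end{equation*}
satisfies
\begin{equation*}
\xi(w,\chi)=\frac{\tau(\chi)}{i^{\alpha}\sqrt p}\,\xi(1-w,\overline\chi).
\end{equation*}
Solving this for $L(w,\chi)$ will then give \eqref{fe}.

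The first ingredient is a twisted theta function. For $\alpha=0$ I take $\theta(x,\chi)=\sum_{n\in\mathbb Z}\chi(n)e^{-\pi n^2x/p}$, and for $\alpha=1$ I take $\theta(x,\chi)=\sum_{n}n\chi(n)e^{-\pi n^2x/p}$. Using the Mellin transform of the Gamma function, I get
\begin{equation*}
\xi(w,\chi)=\h\int_0^\infty \theta(x,\chi)\,x^{\frac{w+\alpha}{2}-1}\,dx
\end{equation*}
for $\mathrm{Re}(w)>1$, in the same spirit as the Mellin setup of Lemma \ref{mellininversion}. Because $\chi(0)=0$, there is no constant term, so the integrand decays exponentially at $\infty$.

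The key step is the theta transformation law
\begin{equation*}
\theta(1/x,\chi)=\frac{i^{-\alpha}\tau(\chi)}{\sqrt p}\,x^{\frac12+\alpha}\,\theta(x,\overline\chi).
\end{equation*}
To prove it, I will use primitivity to write $\chi(n)=\tau(\overline\chi)^{-1}\sum_{a\bmod p}\overline\chi(a)e(an/p)$, valid for all $n$, including $p\mid n$. I then apply Poisson summation to the Gaussian $e^{-\pi n^2x/p}e(an/p)$ in the even case, and to its derivative version in the odd case, which produces the factor $i^{-1}$. Finally, I combine this with $\tau(\chi)\tau(\overline\chi)=\chi(-1)p$.

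With the transformation law in hand, I split the integral at $x=1$ and substitute $x\mapsto 1/x$ on $(0,1)$. This expresses $\xi(w,\chi)$ as a sum of two integrals over $(1,\infty)$, which converge for all $w$, so $\xi(w,\chi)$ extends to an entire function. The resulting expression is visibly symmetric under $w\mapsto1-w$, $\chi\mapsto\overline\chi$, up to the root number $\tau(\chi)/(i^\alpha\sqrt p)$. Rearranging the Gamma factors and the powers of $p/\pi$ then gives \eqref{fe}, with the $(p/\pi)^{\h-w}$ factor coming from the ratio of the two normalizations.

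I expect the main obstacle to be the bookkeeping in the Poisson step for odd $\chi$, where one must get the sign and the factor $i^{-\alpha}$ right. Because this is entirely classical, I would in practice simply cite \cite[Chapter 9]{Dav80}.
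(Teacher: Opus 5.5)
Your proposal is correct: the theta transformation law (including the factor $i^{-\alpha}$ in the odd case), the Mellin representation, and the split at $x=1$ all check out, with the root numbers combining correctly since $\tau(\chi)\tau(\overline\chi)/(i^{2\alpha}p)=1$, and solving $\xi(w,\chi)=\frac{\tau(\chi)}{i^{\alpha}\sqrt p}\,\xi(1-w,\overline\chi)$ for $L(w,\chi)$ gives exactly \eqref{fe}. The paper gives no proof beyond citing \cite[Chapter 9]{Dav80}, and your theta-function/Poisson summation argument is the classical proof in that reference, so the two approaches coincide.
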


\begin{lem}[The convexity bound for the Riemann zeta function] \cite[Chapter V]{Tit86} \label{conv}
    Let $\sigma \in \mathbb{R}$ and $\epsilon>0$. For $t \in \mathbb{R}$ with $|t|\geq 1$, we have
    \begin{equation}
        \lv \zeta(\sigma+it)\rv \ll |t|^{\alpha+\epsilon},
    \end{equation}
    where $\alpha=0$ for $\sigma>1$, $\alpha = \frac{1}{2}(1-\sigma)$ for $\sigma \in [0,1]$, and $\alpha=\frac{1}{2}-\sigma$ for $\sigma<0$. The implied constant depends on $\sigma$ and $\epsilon$.
\end{lem}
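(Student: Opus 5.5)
This lemma is the classical convexity bound, so I will not give a new argument. The plan is to recall the standard Phragmén--Lindelöf proof in the form given in \cite[Chapter V]{Tit86}.

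First, the two boundary regimes. For $\sigma>1$ the Dirichlet series converges absolutely, so $|\zeta(\sigma+it)|\le\zeta(\sigma)\ll_\sigma 1$. For $\sigma<0$, write the functional equation as $\zeta(s)=\chi(s)\zeta(1-s)$ with
\begin{equation*}
\chi(s)=2^s\pi^{s-1}\sin(\tfrac{\pi s}{2})\Gamma(1-s).
\end{equation*}
Stirling's formula gives $|\chi(\sigma+it)|\asymp_\sigma |t|^{\frac12-\sigma}$ for $|t|\ge1$. Since $\mathrm{Re}(1-s)=1-\sigma>1$, the factor $\zeta(1-s)$ is bounded by the first regime. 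Hence $|\zeta(\sigma+it)|\ll_\sigma|t|^{\frac12-\sigma}$ for $\sigma<0$.

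Next, the strip $0\le\sigma\le1$. Fix $\eta=\epsilon$ and work on the lines $\sigma=1+\eta$ and $\sigma=-\eta$. There $\zeta\ll_\eta1$ and $\zeta\ll_\eta|t|^{\frac12+\eta}$ respectively. Apply the Phragmén--Lindelöf principle for a strip to
\begin{equation*}
f(s)=(s-1)\zeta(s)
\end{equation*}
on $-\eta\le\sigma\le1+\eta$, $|t|\ge1$. Multiplying by $s-1$ removes the pole at $s=1$, and it is harmless because it changes exponents by exactly $1$ on both lines. The required a priori growth condition $f(s)\ll e^{|t|^{A}}$ holds, for instance, from the elementary bound $\zeta(s)\ll|t|$ in the strip. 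This bound comes from the Euler--Maclaurin representation $\zeta(s)=\frac{s}{s-1}-s\int_1^\infty\{x\}x^{-s-1}dx$, valid for $\sigma>0$, together with the functional equation near $\sigma=0$.

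The principle yields
\begin{equation*}
|\zeta(\sigma+it)|\ll|t|^{\frac{(1+\eta-\sigma)(\frac12+\eta)}{1+2\eta}+\eta}\ll|t|^{\frac12(1-\sigma)+O(\eta)}.
\end{equation*}
Letting $\eta$ be small relative to $\epsilon$ gives the middle case of the lemma. The boundary cases $\sigma=0,1$ are included with the $\epsilon$-loss, and $\sigma=1$ is also covered directly by $\zeta(1+it)\ll\log|t|$.

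The only step needing care is justifying the growth hypothesis of Phragmén--Lindelöf near the pole. Multiplying by $s-1$ and restricting to $|t|\ge1$ handles this. Everything else is a direct citation of \cite{Tit86}.
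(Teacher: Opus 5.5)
Your proposal is correct and is the same classical argument from \cite[Chapter V]{Tit86} that the paper cites without proof. It uses absolute convergence for $\sigma>1$, the functional equation with Stirling for $\sigma<0$, and Phragm\'en--Lindel\"of (convexity of $\mu(\sigma)$) to interpolate across $0\le\sigma\le1$. One minor point: the Euler--Maclaurin formula together with the functional equation, as you use them, give only a polynomial a priori bound (roughly $|t|^{3/2}$ near $\sigma=0$) rather than $\zeta(s)\ll|t|$, but any polynomial bound is all the Phragm\'en--Lindel\"of hypothesis needs.
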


\begin{lem} \label{extend}
    Let $\Delta_{nm} =  \frac{T}{2\pi}-\frac{nmq}{p}$ and
    \begin{equation}
     \mathcal{F}=2\pi \lc \frac{q}{p} \rc^{\h} \sum_{\substack{n,m\geq 1 \\ |\Delta_{nm}| > H(pqH)^{\epsilon}}}\exp\lc -\frac{(2\pi nmq - Tp)^{2}}{(Hp)^2} \rc e\lc -\frac{nmq}{p} \rc.
     \end{equation}
     Then we have
     \begin{equation}
         \mathcal{F}= O\lc e^{-(pqH)^{\epsilon}} \rc
     \end{equation}
     \begin{proof}
        It is enough to show
        \begin{equation}
            \sum_{\substack{n,m\geq 1 \\ |\Delta_{nm}| > H(pqH)^{\epsilon}}}\exp\lc -\frac{(2\pi nmq - Tp)^{2}}{(Hp)^2} \rc \ll e^{-(pqH)^{\epsilon}}.
        \end{equation}
        We first split the sum at $nm=T^{100}$. For the case $nm > T^{100}$, we have
        \begin{equation}
        \begin{aligned}
            \sum_{nm > T^{100}}\exp\lc -\frac{(2\pi nmq - Tp)^{2}}{(Hp)^2} \rc &= \sum_{r > T^{100}} d(r) \exp\lc -\frac{(2\pi rq - Tp)^{2}}{(Hp)^2} \rc\\
            &\ll \int_{T^{100}}^{\infty} t \exp\lc -\lc\frac{2\pi q}{Hp}\rc^2\lc t-\frac{Tp}{2\pi q}\rc^2 \rc dt\\
            &\ll e^{-T^{100}}.
        \end{aligned}
        \end{equation}
        For the case $nm \leq T^{100}$, we apply $ -\Delta_{nm}^2 < -H^2 (pqH)^{2\epsilon}$ and get
        \begin{equation}
        \begin{aligned}
            \sum_{\substack{nm\leq T^{100} \\ |\Delta_{nm}| > H(pqH)^{\epsilon}}}\exp\lc -\frac{(2\pi nmq - Tp)^{2}}{(Hp)^2} \rc &\ll \sum_{\substack{nm\leq T^{100} \\ |\Delta_{nm}| > H(pqH)^{\epsilon}}}\exp\lc -(pqH)^{2\epsilon} \rc\\
            &\ll \exp\lc -(pqH)^{2\epsilon} \rc \sum_{r\leq T^{100}} d(r)\\
            &\ll T^{100+\epsilon}e^{-(pqH)^{2\epsilon}}.
        \end{aligned}
        \end{equation}
        Finally, we have
        \begin{equation}
            T^{100+\epsilon}e^{-(pqH)^{2\epsilon}} \ll e^{-(pqH)^{\epsilon}}.
        \end{equation}
     \end{proof}
\end{lem}

\begin{lem} \label{residue}
    Let
    \begin{equation}
        B(w) = \frac{2}{q^{w}}-\frac{1}{(pq)^{w}}-\frac{1}{p-1}\lc \frac{p}{q} \rc^{w} \lc 1- \frac{1}{p^{w}} \rc^2.
    \end{equation}
    Then
    \begin{equation}
        \operatorname{Res}_{w=1} B(w)\GT(w)\zeta^2(w) = \frac{2 \pi i}{q}\lc G_{T,H}'(1)-G_{T,H}(1)\log(pq)+2\gamma G_{T,H}(1) \rc.
    \end{equation}
    \begin{proof}
        Note that $B(w)$ and $\GT(w)$ are analytic at $w=1$, and $\zeta^2(w)$ has a double pole at $w=1$. Thus, we have
        \begin{equation}\label{rc}
        \begin{aligned}
            \operatorname{Res}_{w=1}& B(w)\GT(w)\zeta^2(w) \\
            &= 2\pi i\lim_{w \xrightarrow{}1} \frac{d}{dw} (w-1)^2B(w)\GT(w)\zeta^2(w)\\
            &=2\pi i \lc B(1)\GT(1)\lim_{w \xrightarrow{}1}\frac{d}{dw} (w-1)^2\zeta^2(w) + \lim_{w \xrightarrow{}1}(w-1)^2\zeta^2(w)\frac{d}{dw}B(w)\GT(w)\rc.
        \end{aligned}
        \end{equation}
        Observe that
        \begin{equation}\label{r1}
            B(1)=\frac{1}{q},
        \end{equation}
        \begin{equation}
            B'(w) = -B(w)\log q +\frac{\log p}{q^w} \lc \frac{p-p^{2w}}{p^w(p-1)} \rc,
        \end{equation}
        and
        \begin{equation}\label{r2}
            B'(1) = -\frac{\log(pq)}{q}.
        \end{equation}
        Moreover, the Laurent series of $\zeta^2(w)$ at $w=1$ is
        \begin{equation}
            \zeta^2(w) = \frac{1}{(w-1)^2}+\frac{2\gamma}{w-1} + O\lc 1 \rc.
        \end{equation}
        Thus, we have
        \begin{equation}\label{r3}
            \lim_{w \xrightarrow{}1}(w-1)^2\zeta^2(w) = 1
        \end{equation}
        and
        \begin{equation}\label{r4}
            \lim_{w \xrightarrow{}1}\frac{d}{dw} (w-1)^2\zeta^2(w) = 2\gamma.
        \end{equation}
        Finally, substituting \eqref{r1}, \eqref{r2}, \eqref{r3}, and \eqref{r4} into \eqref{rc}, we have
        \begin{equation}
        \operatorname{Res}_{w=1} B(w)\GT(w)\zeta^2(w) = \frac{2 \pi i}{q}\lc G_{T,H}'(1)-G_{T,H}(1)\log(pq)+2\gamma G_{T,H}(1) \rc.
        \end{equation}
    \end{proof}
\end{lem}

\begin{lem} \label{epsilonbound}
    Let $\epsilon \in (0,1)$ and
    \begin{equation}
        B(\epsilon+it) = \frac{2}{q^{\epsilon+it}}-\frac{1}{(pq)^{\epsilon+it}}-\frac{1}{p-1}\lc \frac{p}{q} \rc^{\epsilon+it} \lc 1- \frac{1}{p^{\epsilon+it}} \rc^2.
    \end{equation}
    Then we have
    \begin{equation}
        \lc \frac{q}{p} \rc^{\h} \inftyInt B(\epsilon+it) G_{T,H}(\epsilon+it)\zeta^2(\epsilon+it)dt = O\lc \frac{T}{H}(pqH)^{\epsilon}\lc \frac{q}{p} \rc^{\h} \rc.
    \end{equation}
    \begin{proof}
        We need to show
        \begin{equation}
            \lc \frac{q}{p} \rc^{\h} \inftyInt \lv B(\epsilon+it) G_{T,H}(\epsilon+it)\zeta^2(\epsilon+it) \rv dt \ll \frac{T}{H}(pqH)^{\epsilon}\lc \frac{q}{p} \rc^{\h}.
        \end{equation}
        Observe that we have
        \begin{equation}
            \lv \frac{2}{q^{\epsilon+it}}-\frac{1}{(pq)^{\epsilon+it}}-\frac{1}{p-1}\lc \frac{p}{q} \rc^{\epsilon+it} \lc 1- \frac{1}{p^{\epsilon+it}} \rc^2 \rv \ll (pq)^{\epsilon}
        \end{equation}
        and, by Lemma \ref{conv}, 
        \begin{equation}
            \lv \zeta^2(\epsilon+it) \rv \ll \lv t \rv^{1+\epsilon}.
        \end{equation}
        Then we have
        \begin{equation}
            \lc \frac{q}{p} \rc^{\h} \inftyInt \lv B(\epsilon+it) G_{T,H}(\epsilon+it)\zeta^2(\epsilon+it) \rv dt \ll \lc \frac{q}{p} \rc^{\h}(pq)^{\epsilon} \inftyInt \lv G_{T,H}(\epsilon+it) \rv \lv t \rv^{1+\epsilon} dt.
        \end{equation}
        Now it remains to show
        \begin{equation}
            \inftyInt \lv G_{T,H}(\epsilon+it) \rv \lv t \rv^{1+\epsilon} dt \ll \frac{T}{H}T^{\epsilon}.
        \end{equation}
        Split the integral at $|t|=\frac{T}{H}$. For $|t|\leq \frac{T}{H}$, we apply Lemma \ref{mellin2} and get
        \begin{equation}
            \int_{|t|\leq \frac{T}{H}} \lv G_{T,H}(\epsilon+it) \rv \lv t \rv^{1+\epsilon} dt \ll \int_{|t|\leq \frac{T}{H}} \GT(\epsilon) \lv t \rv^{1+\epsilon} dt \ll HT^{\epsilon-1} \int_{|t|\leq \frac{T}{H}} \lv t \rv^{1+\epsilon} dt.
        \end{equation}
        Since $\int_{|t|\leq \frac{T}{H}} \lv t \rv^{1+\epsilon} dt = 2\int_{0}^{\frac{T}{H}} t^{1+\epsilon} dt$, we have
        \begin{equation}
            HT^{\epsilon-1} \int_{|t|\leq \frac{T}{H}} \lv t \rv^{1+\epsilon} dt \ll HT^{\epsilon-1} \int_{0}^{\frac{T}{H}} t^{1+\epsilon} dt \ll HT^{\epsilon-1} \lc \frac{T}{H} \rc^{2+\epsilon} \ll \frac{T}{H}T^{\epsilon}.
        \end{equation}
        For $|t|> \frac{T}{H}$, we apply Lemma \ref{weight} and get
        \begin{equation} \label{l9eq}
            \int_{|t| > \frac{T}{H}} \lv G_{T,H}(\epsilon+it) \rv \lv t \rv^{1+\epsilon} dt \ll_n \int_{|t| > \frac{T}{H}} HT^{\epsilon-1}\frac{\lc \frac{T}{H} \rc^n}{(1+|t|)^n}\lv t \rv^{1+\epsilon} dt \ll HT^{\epsilon-1}\lc \frac{T}{H} \rc^n \int_{|t| > \frac{T}{H}} \lv t \rv^{1+\epsilon-n} dt.
        \end{equation}
        For $n\geq 3$, $\int_{|t| > \frac{T}{H}} \lv t \rv^{1+\epsilon-n} dt$ converges. Thus, we get
        \begin{equation}
            \int_{|t| > \frac{T}{H}} \lv t \rv^{1+\epsilon-n} dt \ll \int_{\frac{T}{H}}^{\infty} \lv t \rv^{1+\epsilon-n} dt \ll \lc \frac{T}{H} \rc^{2+\epsilon-n}.
        \end{equation}
        Substituting the result into \eqref{l9eq}, we obtain
        \begin{equation}
            HT^{\epsilon-1}\lc \frac{T}{H} \rc^n \int_{|t| > \frac{T}{H}} \lv t \rv^{1+\epsilon-n} dt \ll HT^{\epsilon-1}\lc \frac{T}{H} \rc^n \lc \frac{T}{H} \rc^{2+\epsilon-n}\ll \frac{T}{H}T^{\epsilon}.
        \end{equation}
    \end{proof}
\end{lem}

\section{Proof of Theorem \ref{thm1}}
Define
\begin{equation}
    F(s,p,q)=\inftyInt \lc\frac{p}{q}\rc^{it} \zeta\lc\tfrac{1}{2}-s-it\rc \lc\zeta\lc\tfrac{1}{2}+s+it\rc-\lc s-\tfrac{1}{2}+it\rc^{-1}\rc \exp\lc-\frac{(t-T)^2}{H^2}\rc dt.
\end{equation}
This function is analytic for $\mathrm{Re}(s)>-\h$, and, for $s=0$, we have
\begin{equation} \label{break}
    F(0,p,q)=\inftyInt  \lc\frac{p}{q}\rc^{it} \lv \zeta\lc\tfrac{1}{2}+it\rc \rv^2 \exp\lc-\frac{(t-T)^2}{H^2}\rc dt - \inftyInt  \lc\frac{p}{q}\rc^{it} \frac{\zeta\lc\tfrac{1}{2}-it\rc}{-\tfrac{1}{2}+it} \exp\lc-\frac{(t-T)^2}{H^2}\rc dt.
\end{equation}
Observe that the first integral in \eqref{break} is precisely the left hand side of \eqref{eqthm1}. For the second integral, we apply Cauchy-Schwarz inequality and obtain
\begin{equation}
    \lv\inftyInt  \lc\frac{p}{q}\rc^{it} \frac{\zeta\lc\tfrac{1}{2}-it\rc}{-\tfrac{1}{2}+it} \exp\lc-\frac{(t-T)^2}{H^2}\rc dt\rv^2 \ll H\inftyInt  \lv \frac{\zeta\lc\tfrac{1}{2}-it\rc}{-\tfrac{1}{2}+it}\rv^2 \exp\lc-\frac{(t-T)^2}{H^2}\rc dt.
\end{equation}
By Lemma \ref{intbound}, we have
\begin{equation}
    H\inftyInt  \lv \frac{\zeta\lc\tfrac{1}{2}-it\rc}{-\tfrac{1}{2}+it}\rv^2 \exp\lc-\frac{(t-T)^2}{H^2}\rc dt \ll \frac{H^2}{T^2}T^{\epsilon}.
\end{equation}
Thus, the second integral in \eqref{break} is of size $O\lc \frac{H}{T}T^{\epsilon} \rc$, which is negligible. To obtain the right hand side of \eqref{eqthm1}, our plan is to express $F(s,p,q)$ in a different form and analytically continue it to a neighborhood of $s=0$.

Following the same procedure as in \cite[Section 3.2-3.3]{khan25}, we obtain
\begin{equation} \label{Fs}
    F(s,p,q) = F_1(s,p,q) + F_2(s,p,q) + F_3(s,p,q),
\end{equation}
where
\begin{align}
    F_1(s,p,q) &= -2\infS \frac{s-\h}{(2\pi nm)^{\h + s}} \int_{0}^{\infty}x^{-\frac{3}{2}+s}\sin{x}\inftyInt \lc \frac{xp}{2\pi nmq} \rc^{it} \exp\lc-\frac{(t-T)^2}{H^2}\rc dt dx, \\
    F_2(s,p,q) &= -2\infS \frac{1}{(2\pi nm)^{\h + s}} \int_{0}^{\infty}x^{-\frac{3}{2}+s}\sin{x}\inftyInt it\lc \frac{xp}{2\pi nmq} \rc^{it} \exp\lc-\frac{(t-T)^2}{H^2}\rc dt dx, \\
    F_3(s,p,q) &= 2\sum_{n \geq 1} \frac{1}{(2\pi n)^{\h + s}} \int_{0}^{\infty}x^{-\frac{3}{2}+s}\sin{x}\inftyInt \lc \frac{xp}{2\pi nq} \rc^{it} \exp\lc-\frac{(t-T)^2}{H^2}\rc dt dx,
\end{align}
and these expressions are defined on Re$(s)\in \lc \h,\frac{3}{2} \rc$. We now analyze $F_1(s,p,q), F_2(s,p,q), F_3(s,p,q)$ separately and show that each term admits an analytic continuation to a neighborhood of $s=0$. Then we begin with $F_1(s,p,q)$ by evaluating the $t$ integral
\begin{equation} \label{fourier1}
    \inftyInt \exp(itL) \exp\lc -\frac{(t-T)^2}{H^2} \rc dt = H\sqrt{\pi}\exp\lc -\lc \frac{HL}{2}\rc^2\rc \exp \lc iLT\rc,
\end{equation}
where $L=\log\lc \frac{xp}{2\pi nmq} \rc$. Substituting \eqref{fourier1} into $F_1(s)$ and setting $y=\frac{HL}{2}$, we obtain
\begin{equation}
    F_1(s,p,q) = -\frac{2}{\sqrt{\pi}} \infS\frac{s-\h}{nm} \inftyInt \lc \frac{q}{p} \rc^{-\h + s} \exp \lc -y^2+\frac{2s-1}{H}y \rc e \lc \frac{T}{H\pi}y \rc \sin \lc \frac{2\pi nm q}{p}e^{\frac{2y}{H}} \rc dy.
\end{equation}
By Lemma \ref{bound}, this expression admits an analytic continuation to
a neighborhood of $s=0$. Moreover, after restricting the sum to $n,m \leq T^{50}$, the contribution of the omitted terms is of size $O\lc \lc\frac{p}{q}\rc^{\frac{3}{2}} T^{-47}\rc$. Therefore, we have
\begin{align*}
    \lv F_1(0,p,q) \rv &\ll \lv\frac{1}{\sqrt{\pi}} \sum_{n,m \leq T^{50}}\frac{1}{nm} \inftyInt \lc \frac{q}{p} \rc^{-\h} \exp \lc -y^2-\frac{y}{H} \rc e \lc \frac{T}{H\pi}y \rc \sin \lc \frac{2\pi nm q}{p}e^{\frac{2y}{H}} \rc dy \rv\\
    &\ll \lc \frac{p}{q} \rc^{\h}\sum_{n,m \leq T^{50}}\frac{1}{nm}\\
    &\ll \lc \frac{p}{q} \rc^{\h}T^{\epsilon}.
\end{align*}
Similarly, after applying the same argument to $F_3(s)$, we obtain
\[|F_3(0,p,q)|\ll \lc \frac{p}{q} \rc^{\h}T^{\epsilon}\]
Since both $F_1(0,p,q)$ and $F_3(0,p,q)$ are of size of $O\lc \lc\frac{p}{q}\rc^{\h} T^{\epsilon}\rc$, the main contribution should come from $F_2(0,p,q)$. We next evaluate the $t$ integral
\begin{equation} \label{fourier2}
    \inftyInt it\exp(itL) \exp\lc-\frac{(t-T)^2}{H^2}\rc dt = -\frac{H\sqrt{\pi}}{2}\lc H^2L-2iT \rc \exp\lc -\lc\frac{HL}{2}\rc^2+iTL \rc,
\end{equation}
where $L=\log\lc \frac{xp}{2\pi nmq} \rc$. Substituting \eqref{fourier2} into $F_2(s,p,q)$ and set $y=\frac{HL}{2}$, we have
\begin{equation} \label{F2}
\begin{aligned}
    F_2&(s,p,q)=\\
    &\frac{2}{\sqrt{\pi}} \infS\frac{1}{nm} \inftyInt \lc \frac{q}{p} \rc^{-\h + s} \exp \lc -y^2+\frac{2s-1}{H}y \rc e \lc \frac{T}{H\pi}y \rc \sin \lc \frac{2\pi nm q}{p}e^{\frac{2y}{H}} \rc \lc Hy-iT \rc dy.
\end{aligned}
\end{equation}
Applying Lemma \ref{bound}, we can analytically continue $F_2(s,p,q)$ to a neighborhood of $s=0$ and restrict the sum to $n,m \leq T^{50}$ with an error of size $ O\lc \lc\frac{p}{q}\rc^{\frac{3}{2}} T^{-47}\rc$. Expressing the sine function in exponential form, we have 
\begin{equation} \label{F2}
    F_2(0,p,q)=\frac{1}{i\sqrt{\pi}}\lc  \mathcal{H}_{1,+} +  \mathcal{H}_{1,-} +  \mathcal{H}_{2,+} +  \mathcal{H}_{2,-}\rc  + O\lc \lc\frac{p}{q}\rc^{\frac{3}{2}} T^{-47}\rc,
\end{equation}
where
\begin{align}
    \mathcal{H}_{1,\pm}&:= \pm \fST \lc \frac{p}{q} \rc^{\h} \frac{1}{nm} \inftyInt Hy \exp \lc -y^2-\frac{y}{H} \rc e \lc \frac{Ty}{H\pi} \pm \frac{nmq}{p}e^{\frac{2y}{H}} \rc dy, \label{eq:H1}\\
    \mathcal{H}_{2,\pm}&:= \mp \fST \lc \frac{p}{q} \rc^{\h} \frac{i}{nm} \inftyInt T \exp \lc -y^2-\frac{y}{H} \rc e \lc \frac{Ty}{H\pi} \pm \frac{nmq}{p}e^{\frac{2y}{H}} \rc dy .\label{eq:H2}
\end{align}
Furthermore, by Lemma \ref{bound2}, the sums in \eqref{eq:H1} and \eqref{eq:H2} can be restricted to the range $|\frac{T}{2\pi} \pm \frac{nmq}{p}| \leq H(pqH)^{\epsilon}$ up to the errors of size $O\lc (pqT)^{\epsilon} \lc \frac{p}{q} \rc^{\h} \rc$ and $O\lc \frac{T}{H}(pqT)^{\epsilon} \lc \frac{p}{q} \rc^{\h} \rc$ respectively. Since $\mathcal{H}_{1,+}$ and $\mathcal{H}_{2,+}$ can be absorbed into the error by Lemma \ref{bound2}, it remains to analyze only $\mathcal{H}_{1,-}$ and $\mathcal{H}_{2,-}$. Applying Lemma \ref{a1} on $\mathcal{H}_{1,-}$ and $\mathcal{H}_{2,-}$, we have
\begin{equation}
\begin{aligned}
    \mathcal{H}_{1,-} =& \sum_{\lv \frac{nmq}{p} - \frac{T}{2\pi} \rv \leq H(pqH)^{\epsilon}}\frac{-1}{nm} \inftyInt \lc \frac{p}{q} \rc^{\h} Hy \exp \lc -y^2 \rc e \lc \frac{Ty}{H\pi} - \frac{2nmqy}{Hp} \rc e\lc -\frac{nmq}{p} \rc dy \\
    &+ O\lc (pqT)^{\epsilon} \lc \frac{p}{q} \rc^{\h} \rc,
\end{aligned}
\end{equation}
\begin{equation}
\begin{aligned}
    \mathcal{H}_{2,-} =& \sum_{\lv \frac{nmq}{p} - \frac{T}{2\pi} \rv \leq H(pqH)^{\epsilon}}\frac{i}{nm} \inftyInt \lc \frac{p}{q} \rc^{\h} T \exp \lc -y^2 \rc e \lc \frac{Ty}{H\pi} - \frac{2nmqy}{Hp} \rc e\lc -\frac{nmq}{p} \rc dy \\
    &+ O\lc \frac{T}{H} \lc pqT \rc^{\epsilon} \lc \frac{p}{q} \rc^{\h} \rc.
\end{aligned}
\end{equation}
We now evaluate the integrals
\begin{equation} \label{i1}
    \inftyInt y\exp(-y^2)\exp(iLy)dy = \exp\lc -\frac{L^2}{4} \rc \frac{iL\sqrt{\pi}}{2},
\end{equation}
\begin{equation} \label{i2}
    \inftyInt \exp(-y^2)\exp(iLy)dy = \exp\lc -\frac{L^2}{4} \rc \sqrt{\pi},
\end{equation}
where $L=\frac{2Tp-4\pi nmq}{Hp}$.
\begin{myremark} \label{remark1}
     In analogy with the results of \cite{Atk49} and \cite{Jut83}, one may expect the restriction $\delta>\h$ in Theorem \ref{thm1} to be improvable to $\delta > \frac{1}{3}$. Indeed, by retaining the quadratic term after the Taylor expansion in equation \eqref{taylor3} of Lemma \ref{a1}, this improvement may be expected. However, this introduces an additional quadratic term into the oscillatory phase in \eqref{i1} and \eqref{i2}, making the resulting integrals and the following steps more complicated to evaluate. We therefore do not pursue this refinement in the current work, but hope to revisit it in the future.
\end{myremark}
Substituting \eqref{i1} and \eqref{i2} into $\mathcal{H}_{1,-}$ and $\mathcal{H}_{2,-}$ respectively, we have
\begin{equation} \label{H1}
\begin{aligned}
    \mathcal{H}_{1,-} =& \sum_{\lv \frac{nmq}{p} - \frac{T}{2\pi} \rv \leq H(pqH)^{\epsilon}}\frac{i\sqrt{\pi}}{nm}\lc \frac{p}{q} \rc^{\h}\lc \frac{2\pi nmq - Tp}{p}\rc \exp\lc -\frac{(2\pi nmq - Tp)^{2}}{(Hp)^2} \rc e\lc -\frac{nmq}{p} \rc  \\
    &+ O\lc (pqT)^{\epsilon} \lc \frac{p}{q} \rc^{\h} \rc,\\
\end{aligned}
\end{equation}
\begin{equation} \label{H2}
\begin{aligned}
    \mathcal{H}_{2,-} =& \sum_{\lv \frac{nmq}{p} - \frac{T}{2\pi} \rv \leq H(pqH)^{\epsilon}}\frac{Ti\sqrt{\pi}}{nm}\lc \frac{p}{q} \rc^{\h}\exp\lc -\frac{(2\pi nmq - Tp)^{2}}{(Hp)^2} \rc e\lc -\frac{nmq}{p} \rc \\
    &+ O\lc \frac{T}{H} \lc pqT \rc^{\epsilon} \lc \frac{p}{q} \rc^{\h} \rc.
\end{aligned}
\end{equation}
Substituting \eqref{H1} and \eqref{H2} into the expression \eqref{F2} and extending the
restricted sum back to an infinite sum with the cost of a negligible error by Lemma \ref{extend}, we have
\begin{equation} \label{F2+}
    F_2(0,p,q) = 2\pi \lc \frac{q}{p} \rc^{\h} \infS\exp\lc -\frac{(2\pi nmq - Tp)^{2}}{(Hp)^2} \rc e\lc -\frac{nmq}{p} \rc + O\lc \frac{T}{H} \lc pqT \rc^{\epsilon} \lc \frac{p}{q} \rc^{\h} \rc. 
\end{equation}
Substituting \eqref{F2+} into $F(0,p,q)$ and splitting the sum according to the residue class of $nm \bmod p$, we obtain
\begin{equation} \label{F0}
    F(0,p,q) = 2\pi \lc \frac{q}{p} \rc^{\h} \left[ S_1 + S_2 \right] + O\lc \frac{T}{H} \lc pqT \rc^{\epsilon} \lc \frac{p}{q} \rc^{\h} \rc,
\end{equation}
where
\begin{equation}
    S_1 = \sum_{\substack{n,m \geq 1 \\ nm \equiv 0 \bmod p}} \exp\lc -\lc\frac{2\pi nm q - Tp}{Hp}\rc^2 \rc
\end{equation}
\begin{equation}
    S_2 = \sum_{\substack{r \bmod p\\ r \neq 0}} \sum_{\substack{n,m \geq 1 \\ nm \equiv r \bmod p}} e\lc -\frac{rq}{p} \rc \exp\lc -\lc\frac{2\pi nm q - Tp}{Hp}\rc^2 \rc
\end{equation}
We first consider $S_1$. Applying the inclusion-exclusion principle, we can rewrite $S_1$ as the following expression
\begin{equation}
    S_1 = 2\infS \exp\lc -\lc\frac{2\pi nm q - T}{H}\rc^2 \rc - \infS\exp\lc -\lc\frac{2\pi nm pq - T}{H}\rc^2 \rc.
\end{equation}
Applying Lemma \ref{mellininversion} to the exponential term and interchanging the sums and integrals, we have
\begin{equation}
    S_1 = \frac{1}{2 \pi i} \int_{(c)} \lc \frac{2}{q^w}-\frac{1}{(pq)^w} \rc G_{T,H}(w)\zeta^2(w) dw,
\end{equation}
where $c \in (1,\infty)$. We next consider $S_2$. By the orthogonality of Dirichlet character modulo $p$, the congruence condition $nm \equiv r \bmod p$ can be rewritten as
\begin{equation}
    1_{nm \equiv r \bmod p}=\frac{1}{p-1}\sum_{\chi\bmod p}\chi(nm)\overline{\chi}(r).
\end{equation}
Then we have
\begin{align}
    S_2 &= \sum_{\substack{r \bmod p\\ r \neq 0}} \sum_{n,m \geq 1} \frac{1}{p-1}\sum_{\chi\bmod p}\chi(nm)\overline{\chi}(r)  e\lc -\frac{rq}{p} \rc \exp\lc -\lc\frac{2\pi nm q - Tp}{Hp}\rc^2 \rc\\
    &=\frac{1}{p-1}\sum_{\substack{r \bmod p\\ r \neq 0}} e\lc -\frac{rq}{p} \rc \sum_{\chi \bmod p} \overline{\chi}(r) \infS \chi(nm) \exp\lc -\lc\frac{2\pi nm q - Tp}{Hp}\rc^2 \rc.
\end{align}
Replacing $r$ by $r\overline{q}$ and applying Lemma \ref{mellininversion} to the exponential term, we obtain
\begin{equation}
    S_2 = \frac{1}{p-1} \sum_{\chi \bmod p} \chi(q)  \sum_{\substack{r \bmod p\\ r \neq 0}} e\lc -\frac{r}{p} \rc \overline{\chi}(r) \infS \frac{\chi(nm)}{2\pi i} \int_{(c)} \lc \frac{nmq}{p} \rc^{-w} G_{T,H}(w) dw
\end{equation}
for $c \in (1,\infty)$. Isolating the principal character and interchanging the sum and the integral, we have
\begin{equation}
\begin{aligned}
    S_2 =& \frac{1}{2\pi i(p-1)} \sumstar_{\chi \bmod p} \chi(q)  \sum_{\substack{r \bmod p\\ r \neq 0}} e\lc -\frac{r}{p} \rc \overline{\chi}(r) \int_{(c)} \lc \frac{p}{q} \rc^{w} G_{T,H}(w) L^2(w,\chi) dw\\
    &+ \frac{1}{2\pi i(p-1)} \chi_{0}(q)  \sum_{\substack{r \bmod p\\ r \neq 0}} e\lc -\frac{r}{p} \rc \overline{\chi_{0}}(r) \int_{(c)} \lc \frac{p}{q} \rc^{w} G_{T,H}(w) \lc 1- \frac{1}{p^w} \rc^2 \zeta^2(w) dw.
\end{aligned}
\end{equation}
Since we have $\chi_0(q) = 1$, $\sum\limits_{\substack{r \bmod p\\ r \neq 0}} e\lc -\frac{r}{p} \rc = -1$, and $\sum\limits_{\substack{r \bmod p\\ r \neq 0}} e\lc -\frac{r}{p} \rc \overline{\chi}(r) = \chi(-1) \tau(\overline{\chi})$, we obtain
\begin{equation}
\begin{aligned}
    S_2 =& \frac{1}{2\pi i(p-1)} \sumstar_{\chi \bmod p} \chi(-q) \tau(\overline{\chi})    \int_{(c)} \lc \frac{p}{q} \rc^{w} G_{T,H}(w) L^2(w,\chi) dw\\
    &- \frac{1}{2\pi i(p-1)} \int_{(c)} \lc \frac{p}{q} \rc^{w} G_{T,H}(w) \lc 1- \frac{1}{p^w} \rc^2 \zeta^2(w) dw.
\end{aligned}
\end{equation}
Substituting the new expressions of $S_1$ and $S_2$ into \eqref{F0}, we obtain
\begin{equation}
\begin{aligned}
    F(0,p,q) =& \lc \frac{q}{p} \rc^{\h} \frac{1}{i} \int_{(c)} \left[ \frac{2}{q^w}-\frac{1}{(pq)^w}-\frac{1}{p-1}\lc \frac{p}{q} \rc^{w} \lc 1- \frac{1}{p^w} \rc^2 \right] G_{T,H}(w)\zeta^2(w)dw\\
    &+ \lc \frac{q}{p} \rc^{\h}\frac{1}{i(p-1)} \sumstar_{\chi \bmod p} \chi(-q) \tau(\overline{\chi})    \int_{(c)} \lc \frac{p}{q} \rc^{w} G_{T,H}(w) L^2(w,\chi) dw \\
    &+ O\lc \frac{T}{H}(pqT)^{\epsilon}\lc \frac{p}{q} \rc^{\h} \rc.
\end{aligned}
\end{equation}
Shifting the first integral of $\zeta(w)$ to $w = \epsilon + it$, $\epsilon\in(0,1)$, we pick up the pole at $w=1$ of $\zeta(w)$ by Lemma \ref{residue} and obtain
\begin{equation}
\begin{aligned}
    \operatorname{Res}_{w=1}  \left[ \frac{2}{q^w}-\frac{1}{(pq)^w}-\frac{1}{p-1}\lc \frac{p}{q} \rc^{w} \lc 1- \frac{1}{p^w} \rc^2 \right] &G_{T,H}(w)\zeta^2(w) \\
    &= \frac{2\pi i}{q}\lc \GT'(1) - \log(pq) \GT(1) + 2\gamma \GT(1) \rc,
\end{aligned}
\end{equation}
which contributes to the main term of Theorem \ref{thm1}. For the shifted integral, we obtain
\begin{equation}
    \lc \frac{q}{p} \rc^{\h} \frac{1}{i} \int_{(\epsilon)} \left[ \frac{2}{q^w}-\frac{1}{(pq)^w}-\frac{1}{p-1}\lc \frac{p}{q} \rc^{w} \lc 1- \frac{1}{p^w} \rc^2 \right] G_{T,H}(w)\zeta^2(w)dw = O\lc \frac{T}{H}(pqT)^{\epsilon}\lc \frac{q}{p} \rc^{\h} \rc
\end{equation}
by applying Lemma \ref{epsilonbound}. Eventually, we have
\begin{equation}
\begin{aligned}
    F(0,p,q) =& \frac{2\pi}{\sqrt{pq}}\lc G_{T,H}'(1) - G_{T,H}(1)\log(pq) + 2\gamma G_{T,H}(1)\rc \\
    &+ \lc\frac{q}{p}\rc^{\h}\frac{1}{i(p-1)} \sumstar_{\chi \bmod p} \chi(-q) \tau(\overline{\chi})    \int_{(c)} \lc \frac{p}{q} \rc^{w} G_{T,H}(w) L^2(w,\chi) dw\\
    &+O\lc \frac{T}{H}(pqT)^{\epsilon} \left[\lc \frac{p}{q} \rc^{\h} + \lc \frac{q}{p} \rc^{\h} \right] \rc.
\end{aligned}
\end{equation}
Finally, we shift the remaining integral to $c=\h$, substitute $w=\h+it$, apply the functional equation on one of the factors $L(w,\chi)$ using Lemma \ref{functional}, and obtain the expression in Theorem \ref{thm1}.

\section{Proof of Corollary \ref{cor1}}
Observe that
\begin{equation}
    \overline{\inftyInt  \lc\frac{p}{q}\rc^{it} \lv \zeta\lc\tfrac{1}{2}+it\rc \rv^2 \exp\lc-\frac{(t-T)^2}{H^2}\rc dt} = \inftyInt  \lc\frac{p}{q}\rc^{it} \lv \zeta\lc\tfrac{1}{2}+it\rc \rv^2 \exp\lc-\frac{(t+T)^2}{H^2}\rc dt,
\end{equation}
and following the same proof as last section, we obtain a conjugate version of Theorem \ref{thm1}.
\begin{cor}\label{cor2}
    Let $p,q$ be distinct odd primes, $T>1$ and $H=T^{\delta}$ for $\delta \in \lc \h,1 \rc$. We have
    \begin{equation}\label{eqcor2}
    \begin{aligned}
        \inftyInt \lc \frac{p}{q} \rc^{it} &\lv \zeta\lc \tfrac{1}{2}+it \rc \rv^2 \exp\lc -\frac{(t+T)^2}{H^2} \rc dt =\\
        &\frac{2 \pi}{\sqrt{pq}}\lc G_{T,H}'(1)-G_{T,H}(1)\log(pq)+2\gamma G_{T,H}(1) \rc \\
        &+ \sumstar_{\chi \bmod p} \frac{\sqrt{p}}{i^{\alpha}(p-1)}\chi(-q) \inftyInt G_{T,H}\lc \tfrac{1}{2}+it \rc \lc \frac{\pi}{q}\rc^{it} \frac{\Gamma \lc \frac{1+2\alpha-2it}{4} \rc}{\Gamma \lc \frac{1+2\alpha+2it}{4} \rc} \lv L\lc \tfrac{1}{2}+it,\chi \rc \rv^2 dt\\
        &+ O\lc \frac{T}{H}(pqT)^{\epsilon} \left[\lc \frac{p}{q} \rc^{\h} + \lc \frac{q}{p} \rc^{\h} \right] \rc,
    \end{aligned}
    \end{equation}
    for $\alpha= 0$ if $\chi(-1)=1$ and $\alpha=1$ if $\chi(-1)=-1$.
\end{cor}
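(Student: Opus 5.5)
The plan is to rerun the proof of Theorem \ref{thm1} verbatim with $T$ replaced by $-T$ in the Gaussian weight, and to track the only two places where the sign of $T$ actually matters. First, by the conjugation identity just noted (equivalently, substituting $t\mapsto -t$ and using $\zeta(\tfrac12-it)=\overline{\zeta(\tfrac12+it)}$), the left side of \eqref{eqcor2} is the complex conjugate of the left side of \eqref{eqthm1}. Since $(p/q)^{-it}=(q/p)^{it}$, it can also be read as the original moment with the twist $(q/p)^{it}$ at height $T$. I would not use either shortcut directly, because both move the modulus from $p$ to $q$ or conjugate the characters. Instead I would define $F^{-}(s,p,q)$ exactly as $F(s,p,q)$ but with weight $\exp(-(t+T)^2/H^2)$. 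The polar correction term is handled by Lemma \ref{intbound}, since that lemma only uses $|t|\asymp T$, which holds near $t=-T$ as well. The Khan decomposition \eqref{Fs} then holds unchanged.

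The Fourier transforms \eqref{fourier1} and \eqref{fourier2} become the same expressions with $T\to-T$. So after setting $y=HL/2$ the phase is $e\bigl(-\tfrac{Ty}{H\pi}\pm\tfrac{nmq}{p}e^{2y/H}\bigr)$, and the factor $Hy-iT$ becomes $Hy+iT$. Lemmas \ref{bound} and \ref{bound2} apply with the roles of $\pm$ exchanged. The stationary range is now $\frac{T}{2\pi}-\frac{nmq}{p}$ for the \emph{plus} sign of the sine expansion, so now $\mathcal{H}_{\cdot,-}$ is the negligible piece. Lemma \ref{a1}, after replacing $\frac{nmq}{p}$ by $-\frac{nmq}{p}$ in the phase, yields
\begin{equation*}
F^{-}_2(0,p,q)=2\pi\Bigl(\frac{q}{p}\Bigr)^{\h}\infS \exp\Bigl(-\frac{(2\pi nmq-Tp)^2}{(Hp)^2}\Bigr)\, e\Bigl(+\frac{nmq}{p}\Bigr)+O\Bigl(\frac{T}{H}(pqT)^{\epsilon}\Bigl(\frac{p}{q}\Bigr)^{\h}\Bigr).
\end{equation*}
The main obstacle is carrying out this step carefully. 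One has to check the overall sign and constant: the factors $i$, $-i$ from the sine, the sign of $T$ in $Hy+iT$, and the sign of $L$ in \eqref{i1}. These must combine to give the same prefactor $2\pi(q/p)^{1/2}$, with only the additive character flipped from $e(-nmq/p)$ to $e(nmq/p)$. I would verify this by noting that the whole computation is the complex conjugate of the original one term by term; conjugation sends $e(-x)$ to $e(x)$ and leaves the real Gaussian and real prefactor fixed.

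From here the argument follows Section 3 unchanged. The sum $S_1$ is identical, so the residue from Lemma \ref{residue} gives the same main term. The term $\sum_{r\neq0}e(r/p)=-1$ is unchanged, so the principal-character contribution and Lemma \ref{epsilonbound} are also unchanged. The only change is in the Gauss sum: $\sum_{r\ne0}e(r/p)\overline\chi(r)=\tau(\overline\chi)$, with no $\chi(-1)$ factor. Hence the dual sum carries $\chi(q)\tau(\overline\chi)$ where previously it carried $\chi(-q)\tau(\overline\chi)$. We then shift to $\mathrm{Re}\,w=\h$ and apply Lemma \ref{functional} to one factor $L(w,\chi)$, using $\tau(\chi)\tau(\overline\chi)=\chi(-1)p$. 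In Theorem \ref{thm1} this identity combined with $\chi(-q)$ to leave $\chi(q)$. Here it produces an extra $\chi(-1)$, giving $\chi(-q)$, which is precisely \eqref{eqcor2}. The error terms are identical, since every bound used depended on $T$ only through $|T|$.
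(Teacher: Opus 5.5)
Your proposal is correct and follows essentially the paper's route: the paper also records the conjugation identity and reruns the proof of Theorem \ref{thm1} with the weight centred at $-T$. As you found, the only substantive change is $e(-rq/p)\to e(rq/p)$, which removes the $\chi(-1)$ from the Gauss sum and so turns $\chi(q)$ into $\chi(-q)$ after the functional equation. The conjugation shortcut you set aside is in fact harmless and quicker: conjugating \eqref{eqthm1}, substituting $t\mapsto -t$, and reindexing $\chi\mapsto\overline{\chi}$ gives \eqref{eqcor2} directly, because the main term is real, $\overline{G_{T,H}(\tfrac12+it)}=G_{T,H}(\tfrac12-it)$, and $\overline{i^{-\alpha}}=\chi(-1)\,i^{-\alpha}$ supplies exactly the needed factor $\chi(-1)$.
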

Then, we define the following
\begin{equation}
    J_1(p,q) = \inftyInt  \lc\frac{p}{q}\rc^{it} \lv \zeta\lc\tfrac{1}{2}+it\rc \rv^2 \lc \exp\lc-\frac{(t-T)^2}{H^2}\rc + \exp\lc-\frac{(t+T)^2}{H^2}\rc \rc dt,
\end{equation}
\begin{equation}
    J_2(p,q) = \inftyInt  \lc\frac{p}{q}\rc^{it} \lv \zeta\lc\tfrac{1}{2}+it\rc \rv^2 \lc \exp\lc-\frac{(t-T)^2}{H^2}\rc - \exp\lc-\frac{(t+T)^2}{H^2}\rc \rc dt.
\end{equation}
Observe that
\begin{equation}
    \overline{\inftyInt  \lc\frac{p}{q}\rc^{it} \lv \zeta\lc\tfrac{1}{2}+it\rc \rv^2 \exp\lc-\frac{(t\pm T)^2}{H^2}\rc dt} = \inftyInt  \lc\frac{q}{p}\rc^{it} \lv \zeta\lc\tfrac{1}{2}+it\rc \rv^2 \exp\lc-\frac{(t\pm T)^2}{H^2}\rc dt
\end{equation}
Then we get
\begin{equation} \label{conj1}
    J_1(q,p) = \overline{J_1(p,q)} = J_1(p,q),
\end{equation}
\begin{equation} \label{conj2}
    J_2(q,p) = \overline{J_2(p,q)} = -J_2(p,q).
\end{equation}
Moreover, by Corollary \ref{cor2}, we have
\begin{equation}
\begin{aligned}
    J_1(p,q) &=\frac{4 \pi}{\sqrt{pq}}\lc G_{T,H}'(1)-G_{T,H}(1)\log(pq)+2\gamma G_{T,H}(1) \rc \\
        &+ 2\sumstar_{\substack{\chi \bmod p\\ \chi(-1) = 1}} \frac{\sqrt{p}}{p-1}\chi(q) \inftyInt G_{T,H}\lc \tfrac{1}{2}+it \rc \lc \frac{\pi}{q}\rc^{it} \frac{\Gamma \lc \frac{1-2it}{4} \rc}{\Gamma \lc \frac{1+2it}{4} \rc} \lv L\lc \tfrac{1}{2}+it,\chi \rc \rv^2 dt\\
        &+ O\lc \frac{T}{H}(pqT)^{\epsilon} \left[\lc \frac{p}{q} \rc^{\h} + \lc \frac{q}{p} \rc^{\h} \right] \rc,
\end{aligned}
\end{equation}
\begin{equation}
\begin{aligned}
    J_2(p,q) &=2\sumstar_{\substack{\chi \bmod p\\ \chi(-1) = -1}} \frac{\sqrt{p}}{i(p-1)}\chi(q) \inftyInt G_{T,H}\lc \tfrac{1}{2}+it \rc \lc \frac{\pi}{q}\rc^{it} \frac{\Gamma \lc \frac{3-2it}{4} \rc}{\Gamma \lc \frac{3+2it}{4} \rc} \lv L\lc \tfrac{1}{2}+it,\chi \rc \rv^2 dt\\
    &+ O\lc \frac{T}{H}(pqT)^{\epsilon} \left[\lc \frac{p}{q} \rc^{\h} + \lc \frac{q}{p} \rc^{\h} \right] \rc.
\end{aligned}
\end{equation}
Finally, by \eqref{conj1} and \eqref{conj2}, we obtain
\begin{equation} \label{fn1}
\begin{aligned}
    \sumstar_{\substack{\chi \bmod p\\ \chi(-1) = 1}} & \frac{\sqrt{p}}{p-1}\chi(q) \inftyInt G_{T,H}\lc \tfrac{1}{2}+it \rc \lc \frac{\pi}{q}\rc^{it} \frac{\Gamma \lc \frac{1-2it}{4} \rc}{\Gamma \lc \frac{1+2it}{4} \rc} \lv L\lc \tfrac{1}{2}+it,\chi \rc \rv^2 dt\\
    =&\sumstar_{\substack{\chi \bmod q\\ \chi(-1) = 1}} \frac{\sqrt{q}}{q-1}\chi(p) \inftyInt G_{T,H}\lc \tfrac{1}{2}+it \rc \lc \frac{\pi}{p}\rc^{it} \frac{\Gamma \lc \frac{1-2it}{4} \rc}{\Gamma \lc \frac{1+2it}{4} \rc} \lv L\lc \tfrac{1}{2}+it,\chi \rc \rv^2 dt\\ 
    &+ O\lc \frac{T}{H}(pqT)^{\epsilon} \left[\lc \frac{p}{q} \rc^{\h} + \lc \frac{q}{p} \rc^{\h} \right] \rc
\end{aligned}
\end{equation}
and
\begin{equation} \label{fn2}
\begin{aligned}
    \sumstar_{\substack{\chi \bmod p\\ \chi(-1) = -1}} & \frac{\sqrt{p}}{i(p-1)}\chi(q) \inftyInt G_{T,H}\lc \tfrac{1}{2}+it \rc \lc \frac{\pi}{q}\rc^{it} \frac{\Gamma \lc \frac{3-2it}{4} \rc}{\Gamma \lc \frac{3+2it}{4} \rc} \lv L\lc \tfrac{1}{2}+it,\chi \rc \rv^2 dt\\
    =&-\sumstar_{\substack{\chi \bmod q\\ \chi(-1) = -1}} \frac{\sqrt{q}}{i(q-1)}\chi(p) \inftyInt G_{T,H}\lc \tfrac{1}{2}+it \rc \lc \frac{\pi}{p}\rc^{it} \frac{\Gamma \lc \frac{3-2it}{4} \rc}{\Gamma \lc \frac{3+2it}{4} \rc} \lv L\lc \tfrac{1}{2}+it,\chi \rc \rv^2 dt\\ 
    &+ O\lc \frac{T}{H}(pqT)^{\epsilon} \left[\lc \frac{p}{q} \rc^{\h} + \lc \frac{q}{p} \rc^{\h} \right] \rc.
\end{aligned}
\end{equation}
Adding \eqref{fn1} and \eqref{fn2} and using $\chi(p)=\chi(-p)$ in \eqref{fn1} and $-\chi(p)=\chi(-p)$ in \eqref{fn2}, we obtain Corollary \ref{cor1}.

\printbibliography

\end{document}